\newtheorem{theorem}{Theorem}[section]
\newtheorem{lemma}[theorem]{Lemma}
\newtheorem{corollary}[theorem]{Corollary}
\theoremstyle{definition}
\theoremstyle{remark}
\newtheorem{remark}[theorem]{Remark}
\numberwithin{equation}{section}
\newcommand{\RR}[1]{\mathbb{#1}}
\newcommand{\rd}{{\mathbb R^d}}
\newcommand{\rr}{{\mathbb R}}
\def\N{{\mathbb N}}
\def\E{{\mathbb E}}
\def\D{{\mathbb D}}
\begin{document}

\title{\bf Distributed-order fractional Cauchy problems on bounded domains}

\author{Mark M. Meerschaert}
\address{Mark M. Meerschaert, Department of Statistics and Probability,
Michigan State University, East Lansing, Michigan 48823, USA.}
\email{mcubed@stt.msu.edu}
\urladdr{http://www.stt.msu.edu/$\sim$mcubed/}
\thanks{The research of MMM was partially supported by NSF grants DMS-0803360 and EAR-0823965.}

\author{Erkan Nane}
\address{Erkan Nane, Department of Mathematics and Statistics, 221 Parker Hall, Auburn University, Auburn, Alabama 36849, USA}
\email{nane@auburn.edu}
\thanks{EN is the corresponding author.}

\author{P. Vellaisamy}
\address{P. Vellaisamy, Department of Mathematics,
 Indian Institute of Technology Bombay, Powai, Mumbai 400076, INDIA.}
\email{pv@math.iitb.ac.in}


\begin{abstract}
In a fractional Cauchy problem, the usual first order time
derivative is replaced by a fractional derivative.  The fractional
derivative models time delays in a diffusion process.  The order
of the fractional derivative can be distributed over the unit
interval, to model a mixture of delay sources.   In this paper, we
provide explicit strong solutions and stochastic analogues for
distributed-order fractional Cauchy problems on bounded domains
with Dirichlet boundary conditions. Stochastic solutions are
constructed using a non-Markovian time change of a killed Markov
process generated by a uniformly elliptic second order space
derivative operator.
\end{abstract}

\keywords{Fractional diffusion,  distributed-order Cauchy
problems, hitting time, Caputo fractional derivative, stochastic
solution, uniformly elliptic operator, bounded domain, boundary
value problem.}

\maketitle

\newpage

\section{Introduction}
This paper develops explicit strong solutions for
distributed-order fractional Cauchy problems on bounded domains
$D\subset \rd$ with Dirichlet boundary conditions.  Cauchy
problems $\partial u/\partial t=Lu$ model diffusion processes.
The simplest case $L=\Delta=\sum_j \partial^2 u/\partial x_j^2$
governs a Brownian motion $B(t)$ with
 density $u(t,x)$, for which the square root scaling $u(t,x)=t^{-1/2}u(1,t^{-1/2}x)$ pertains \cite{Einstein1906}.
   The fractional Cauchy problem $\partial^\beta u/\partial t^\beta=Lu$ with $0<\beta<1$ models anomalous
   sub-diffusion, in which a cloud of particles spreads slower than the square root of time
   \cite{koch1,koch2,nigmatullin,sch-wyss,zaslavsky}.  When $L=\Delta$, the solution $u(t,x)$ is the density of
    a time-changed Brownian motion $B(E_t)$, where the non-Markovian time change $E_t=\inf\{\tau>0:W_\tau>t\}$ is the
    inverse, or first passage time, of a stable subordinator $W_t$ with index $\beta$.  The scaling
    $W_{ct}=c^{1/\beta}W_t$ in law implies $E_{ct}=c^\beta E_t$ in law for the inverse process,
    so that $u(t,x)=t^{-\beta/2}u(1,t^{-\beta/2}x)$.  The process $B(E_t)$ is the long-time scaling
    limit of a random walk  \cite{Zsolution,limitCTRW} when the random waiting times between
    jumps belong to the $\beta$-stable domain of attraction.  Roughly speaking, a power-law distribution
    of waiting times leads to a fractional time derivative in the governing equation, and its  power law
    index
    equals the order of the fractional time derivative.
    For a uniformly elliptic operator $L$ on a bounded domain $D\subset \rd$, under suitable technical
    conditions and assuming Dirichlet boundary conditions, the Cauchy problem governs a Markov process
     $X(t)$ killed at the boundary.  Then the fractional Cauchy problem governs the time-changed process
     $X(E_t)$ \cite{m-n-v-aop}. Recently, Barlow and \u Cern\'y \cite{barlow-cerny} obtained $B(E_t)$ as
     the scaling limit of a random walk in a random environment, when the transition rates are drawn from a
     power-law distribution with index $\beta$.

Fractional derivatives are almost as old as their
integer-order cousins \cite{MillerRoss, Samko}.  Fractional
diffusion equations are important in
physics, finance, hydrology, and many other areas
\cite{GorenfloSurvey,koch2,MetzlerKlafter,scalas1}.  Nigmatullin \cite{nigmatullin} gave a
physical derivation of the fractional Cauchy problem.  The mathematical study of fractional Cauchy problems was
initiated by Schneider and Wyss \cite{sch-wyss} and Kochubei \cite{koch1, koch2}.
Fractional Cauchy problems were used by
Zaslavsky \cite{zaslavsky} as a model for Hamiltonian chaos.
Stochastic solutions of fractional Cauchy problems are the basis for particle tracking schemes
\cite{Chakraborty,Chechkin,time-Langevin}.

In some applications, the waiting times between particle jumps
evolve according to a more complicated process that cannot be
adequately described by a single power law.  Then, a waiting time
model that is conditionally power law leads to a distributed-order
fractional derivative in time, defined by integrating the
fractional derivative of order $\beta$ against the probability
distribution of the power-law index \cite{M-S-ultra}. The
resulting distributed-order fractional Cauchy problem provides a
more flexible model for anomalous sub-diffusion.  The L\'evy
measure of a stable subordinator with index $\beta$ is integrated
against the power law index distribution to define a subordinator
$W_t$.  Its inverse $E_t$ produces a stochastic solution $X(E_t)$
of the distributed-order fractional Cauchy problem on $\rd$, when
$X(t)$ solves the original
 Cauchy problem $\partial u/\partial t=Lu$.  Mild solutions and stochastic solutions for distributed-order
  fractional Cauchy problems on $\rd$ were developed by Meerschaert and Scheffler \cite{M-S-ultra},
  Kov\'acs and Meerschaert \cite{ultrafast}, and Hahn, Kobayashi and Umarov \cite{h-k-umarov}.
   Kochubei \cite{koch3} obtained strong solutions for the case $L=\Delta$, and proved the uniqueness for more general
    uniformly elliptic operators $L$.

In this paper, we extend the basic approach of \cite{Zsolution,limitCTRW} to solve distributed-order fractional
 Cauchy problems with Dirichlet boundary conditions on bounded domains $D\subset \rd$.  Our recent paper
  \cite{m-n-v-aop} treated fractional Cauchy problems on bounded domains using eigenfunction expansions
  and killed Markov processes.  Here, we extend that theory to the distributed-order fractional Cauchy problems,
  using the inverse subordinators from \cite{ultrafast,M-S-ultra}, along with some deep technical results
  from Kochubei \cite{koch3}.  We construct unique classical solutions, identify the stochastic process
  governed by the distributed order fractional Cauchy problem, and prove existence.  At the end of this paper,
   we  discuss also some open problems in the literature, including extensions to jump processes.

\section{Heat kernels on bounded domains}\label{sec2}

Let $D$ be a bounded domain in $\rd$. We denote by $C^k(D),
C^{k,\alpha}(D), C^k(\bar D)$  the space of k-times differentiable
functions in $D$, the space of $k$-times differential functions
with $k$-th derivative is H\"older continuous of index $\alpha$,
 and the space of functions that have all the derivatives up to order  $k$ extendable continuously up to the
 boundary $\partial D$ of $D$, respectively. We refer to  \cite{m-n-v-aop} for a detailed discussion of
 these spaces and concepts in this section.

A uniformly elliptic operator in divergence form is defined on
$C^2$ functions by
\begin{equation}\label{unif-elliptic-op}
Lu=\sum_{i,j=1}^{d}\frac{\partial \left(a_{ij}(x)(\partial
u/\partial x_i)\right)}{\partial x_j}
\end{equation}
with $a_{ij}(x)=a_{ji}(x)$ and, for some $\lambda>0,$
\begin{equation}\label{elliptic-bounds}
\lambda \sum_{i=1}^ny_i^2\leq \sum_{i,j=1}^na_{ij}(x)y_iy_j\leq
\lambda^{-1} \sum_{i=1}^ny_i^2,\ \ \forall y \in \rd.
\end{equation}

The operator $L$ acts on the Hilbert space $L^2(D)$. We define the
initial domain $C_0^\infty(\bar D)$ of the operator as follows. We
say that $f$ is in $C_0^\infty(\bar D)$ if  $f\in C^\infty(\bar
D)$ and $f(x)=0$ for all $x\in \partial D$. This condition
incorporates the notion of  Dirichlet boundary conditions.

If $X_t$ is a solution to
$$
dX_t=\sigma (X_t)dB_t+b(X_t)dt, \ \ X_0=x_0,
$$
where $\sigma$ is a $d\times d$ matrix and $B_t$ is a Brownian
motion, then $X_t$ is associated with the operator $L$ with
$a=\sigma \sigma ^T$ (see Chapters 1 and 5 of Bass \cite{bass}).
Define the first exit time as
 $\tau_D(X)=\inf \{ t\geq 0:\ X_t\notin D\}$. Then the semigroup defined
 by
 \begin{equation}\label{eqn2.3n}
T_{D}(t)f(x)=E_x[f(X_t)I(\tau_D(X))>t)]
\end{equation}
 has generator $L$, which
follows by an application of the It$\mathrm{\hat{ o}}$ formula.









 Let $D $ be a  bounded domain in $\RR{R}^d$ and  $L$ be a uniformly elliptic operator of divergence form
with Dirichlet
  boundary conditions on $D$. Then  there exists a constant $\Lambda$ such that for all $x\in D$,
\begin{equation}\label{L-uniform-bound}
\sum_{i,j=1}^d |a_{ij}(x)|\leq \Lambda,
\end{equation}
 see, for example, Davies \cite[Chapter 6]{davies-d}

Let
  $T_D(t)$ be the corresponding semigroup. Then
 $T_D(t)$ is an ultracontractive semigroup (even  an intrinsically
 ultracontractive semigroup), see Corollary 3.2.8, Theorem 2.1.4, Theorem 4.2.4, and Note 4.6.10 in \cite{davies}.
  Every ultracontractive semigroup has a kernel for the killed semigroup on a
   bounded domain which can be represented as a series expansion of the eigenvalues
   and the eigenfunctions of $L$  (cf. \cite[Theorems 2.1.4 and 2.3.6]{davies} and
    \cite[Theorems 8.37 and 8.38]{gilbarg-trudinger} ). There exist eigenvalues
$0< \mu_1<\mu_2\leq \mu_3\cdots,$ such that $\mu_n\to\infty,$ as
$n\to\infty$, with the  corresponding complete orthonormal set (in
$L^2$) of eigenfunctions $\psi_n$ of the operator $L$
satisfying

\begin{equation}\label{eigen-eigen}
 L \psi_n(x)=-\mu_n \psi_n(x), \ x\in D:\  \\
\psi_n |_{\partial D}=0.
\end{equation}

 In this case,
$$
p_D(t,x,y)=\sum_{n=1}^{\infty}e^{-\mu_n t}\psi_n(x)\psi_n(y)
$$
is the heat kernel of the killed semigroup $T_D$. The series
converges absolutely and  uniformly on $[t_0,\infty)\times D\times
D$ for all $t_0>0$.

Denote the Laplace transform $ t\rightarrow s $ of $u(t,x)$ by
\begin{equation*}\begin{split}
\tilde{u}(s,x)&=\int_{0}^{\infty}e^{-s t}u(t,x)dt .
\end{split}\end{equation*}
Since we are working on a bounded domain, the Fourier transform
methods in \cite{Zsolution} are not useful.  Instead, we will
employ Hilbert space methods.  Hence, given a complete orthonormal
basis $\{\psi_n(x)\}$ on $L^2(D)$, we will call
\begin{equation*}
\bar{u}(t,n)=\int_{D}\psi_n(x)u(t,x)dx,
\end{equation*}
 and
 \begin{equation}\begin{split} \label{eqn2.5v3}
\hat{u}(s,n)&=\int_{D}\psi_n(x)\int_{0}^{\infty}e^{-s t} u(t,x)dtdx   \\
&= \int_{D} \psi_{n}(x) \tilde{u}(s, x) dx   \\
& = \int_{0}^{\infty}e^{-s t} \bar{u}(t,x)dt~~\mbox{(when Fubini's
condition holds)}
 \end{split}\end{equation}
respectively the $\psi_n$ and the $\psi_n$-Laplace transforms.
Since $\{\psi_n\}$ is a complete orthonormal basis for $L^2(D)$,
we can invert the $\psi_n$-transform to obtain
\[u(t,x)=\sum_n \bar{u}(t,n) \psi_n(x)\]
for any $t>0$, where the above series converges in the $L^2$ sense
(e.g., see \cite[Proposition 10.8.27]{Royden}).

 Suppose $D$  satisfies the uniform exterior cone condition. Let $\{X_t\}$ be a Markov process
 in $\rd$ with generator $L$, and $f$
be continuous on $\bar D$. Then the semigroup
\begin{equation}\begin{split}\label{TDdef}
T_D(t)f(x)&=E_x[f(X_t)I(\tau_D(X)>t)]=\int_D p_D(t,x,y)f(y)dy=\sum_{n=1}^{\infty}e^{-\mu_n t}\psi_n(x)\bar{f}(n)
\end{split}\end{equation}
 solves the Dirichlet initial-boundary value problem in $D$:
\begin{eqnarray}
\frac{\partial u(t,x)}{\partial t}&=& L_D u(t,x),\ \ x\in D, \ t>0, \nonumber\\
u(t,x)&=&0,\ \ x\in \partial D,\nonumber\\
u(0,x)&=&f(x), \ \ x\in D.\nonumber
\end{eqnarray}
See \cite[Theorem 6.3, page 177]{bass} or \cite[Theorem 2.1.4]{davies}.




\begin{remark}
Let $L^{\infty}(D)= \{f: ||f||_{\infty}<\infty\}$, where
$||f||_{\infty} = ess~ sup ~{|f|}$. The eigenfunctions belong to
$L^\infty(D)\cap C^\alpha(D)$ for some $\alpha>0$,
 by \cite[Theorems 8.15 and 8.24]{gilbarg-trudinger}. If $D$ satisfies
 the
 {\em uniform exterior cone condition}, then all the eigenfunctions belong to $C^\alpha(\bar D)$
  by \cite[Theorem 8.29]{gilbarg-trudinger}. If $a_{ij}\in C^\alpha(\bar D)$ and  $\partial D \in C^{1,\alpha}$,
   then  the eigenfunctions belong to $C^{1,\alpha}(\bar D)$ by \cite[Corollary
8.36]{gilbarg-trudinger}.
  If $a_{ij} \in C^{\infty}(D)$, then each eigenfunction of $L$ is in $C^\infty(D)$ by
  \cite[Corollary 8.11]{gilbarg-trudinger}.
If $a_{ij} \in C^{\infty}(\bar{D})$ and $\partial D\in C^\infty,$
then each eigenfunction of $L$ is in $C^\infty(\bar{D})$ by
\cite[Theorem 8.13]{gilbarg-trudinger}.
\end{remark}


In the case $L=\Delta$, the Laplacian, the corresponding Markov
process is a  Brownian motion. We denote the eigenvalues and the
eigenfunctions of $\Delta$ on $D$, with Dirichlet boundary
conditions,  by $\{\lambda_n, \phi_n\}_{n=1}^\infty$, where
$\phi_n\in C^{\infty}(D)$.



\section{Distributed order fractional derivatives}\label{sec3}

Fractional derivatives in time are useful for physical models that involve sticking or trapping
 \cite{Zsolution}.  They are closely connected to random walk models with long waiting times between
 particle jumps \cite{limitCTRW}.  The fractional derivatives are essentially convolutions with a power law.
  Various forms of the fractional derivative can be defined, depending on the domain of the power law kernel,
  and the way boundary points are handled \cite{MillerRoss, Samko}.  The Caputo fractional derivative
   \cite{Caputo} is defined for $0<\beta<1$ as
\begin{equation}\label{CaputoDef}
\frac{\partial^\beta u(t,x)}{\partial
t^\beta}=\frac{1}{\Gamma(1-\beta)}\int_0^t \frac{\partial
u(r,x)}{\partial r}\frac{dr}{(t-r)^\beta} .
\end{equation}
Its Laplace transform
\begin{equation}\label{CaputoLT}
\int_0^\infty e^{-st} \frac{\partial^\beta u(t,x)}{\partial
t^\beta}\,ds=s^\beta \tilde u(s,x)-s^{\beta-1} u(0,x)
\end{equation}
incorporates the initial value in the same way as the first
derivative.  The Caputo derivative is useful for solving
differential equations that involve a fractional time
derivative \cite{GorenfloSurvey,Podlubny}, because it naturally incorporates initial values.

Fractional time derivatives emerge in anomalous diffusion models,
when particles wait a long time between jumps.
 In the standard model, called a continuous time random walk (CTRW), a particle waits a random time $J_n>0$ and
  then takes a step of random size $Y_n$.  For the purposes of this paper, we may assume that the two sequences
  of i.i.d.\ random variables $(J_n)$ and $(Y_n)$ are independent.  This is called an uncoupled CTRW.
  The particle
   arrives at location $X(n)=Y_1+\cdots+Y_n$ at time $T(n)=J_1+\cdots+J_n$.   Since
    $N_t=\max\{n\geq 0:T(n)\leq t\}$ is the number of jumps by time $t>0$, the particle location at
    time $t$ is $X(N_t)$.  If $EY_n=0$ and $E[Y_n^2]<\infty$ then, as the time scale $c\to\infty$, the
    random walk of particle jumps has a scaling limit $c^{-1/2}X([ct])\Rightarrow B(t)$, a standard Brownian
    motion.  If
$P(J_n>t)\sim ct^{-\beta}$ for some $0<\beta<1$ and $c>0$, then
the scaling limit $c^{-1/\beta}T([ct])\Rightarrow W_t$ is a
strictly increasing stable L\'evy process with index $\beta$,
sometimes called a stable subordinator.  The jump times $T(n)$ and
the number of jumps $N_t$ are inverses $\{N_{t}\geq n\}=\{T(n)\leq
t\}$, and it follows that the scaling limits are also inverses
 \cite[Theorem 3.2]{limitCTRW}:
$c^{-\beta}N_{ct}\Rightarrow E_t$, where
\begin{equation}\label{Etdef}
E_t=\inf\{\tau:W_\tau> t\} ,
\end{equation}
so that $\{E_t\leq \tau\}=\{W_\tau\geq t\}$.  A continuous mapping argument \cite[Theorem 4.2]{limitCTRW} yields
the CTRW scaling limit:  Heuristically, since $N_{ct}\approx c^{\beta}E_t$, we have
$c^{-\beta/2}X(N_{[ct]})\approx (c^\beta)^{-1/2}X(c^\beta
E_t)\approx B(E_t)$, a time-changed Brownian motion.  The density $u(t,x)$ of the process $B(E_t)$ solves a
fractional Cauchy problem
\[\frac{\partial^\beta u(t,x)}{\partial t^\beta}=a\frac{\partial^2 u(t,x)}{\partial x^2}\]
 for some $a>0$, where the order of the fractional derivative equals the index of the stable subordinator.
 Roughly speaking, if the probability of waiting longer than time $t>0$ between jumps falls off like
 $t^{-\beta}$, then the limiting particle density solves a diffusion equation that involves a fractional
  time derivative of the same order $\beta$.  Hence, the fractional derivatives in time model sticking or
  trapping of particles for long (power-law distributed) periods of time.

 A more flexible model for diffusion processes can be obtained by considering a sequence of CTRW.  At each scale
  $c>0$, we are given i.i.d.\ waiting times $(J_n^{c})$ and i.i.d.\ jumps $(Y_n^{c})$.  Assume the waiting times
   and jumps form triangular arrays whose row sums converge in distribution. Letting
   $X^{c}(n)=Y_1^{c}+\cdots+Y_n^{c}$ and $T^{c}(n)=J_1^{c}+\cdots+J_n^{c}$, we require that
$X^{c}(cu)\Rightarrow A(t)$ and $T^{c}(cu)\Rightarrow W_t$ as $c\to\infty$, where the limits $A(t)$ and $W_t$ are
 independent L\'evy processes.
Letting $N^{c}_t=\max\{n\geq 0:T^{c}(n)\leq t\}$, the CTRW scaling
limit $X^{c}(N_t^{c})\Rightarrow A(E_t)$
 \cite[Theorem 2.1]{M-S-triangular}.  A power-law mixture model for waiting times was proposed
  in \cite{M-S-ultra}:
   Take an i.i.d.\ sequence of mixing variables $(B_i)$ with $0<B_i<1$ and assume
   $P\{J_i^{c}>u|B_i=\beta\}=c^{-1}u^{-\beta}$ for $u\geq c^{-1/\beta}$, so that the waiting times are power
    laws conditional on the mixing variables.  The waiting time process $T^{c}(cu)\Rightarrow W_t$ a
    nondecreasing L\'evy process, or subordinator, with ${\mathbb
E}[e^{-s W_t}]=e^{-t\psi_W(s)}$  and Laplace exponent
\begin{equation}\label{phiWdef}
\psi_W(s)=\int_0^\infty(e^{-s x}-1)\phi_W(dx) .
\end{equation}
The L\'evy measure
\begin{equation}\label{psiWdef}
\phi_W(t,\infty)=\int_0^1 t^{-\beta}\mu(d\beta),
\end{equation}
where $\mu$ is the distribution of the mixing variable \cite[Theorem 3.4 and Remark 5.1]{M-S-ultra}.  A
computation \cite[Eq.\ (3.18)]{M-S-ultra} using
$\int_0^\infty(1-e^{-s t})\beta
t^{-\beta-1}dt=\Gamma(1-\beta)s^\beta$ shows that
\begin{equation}\begin{split}\label{psiW}
\psi_W(s)
&= \int_0^1  s^\beta \Gamma(1-\beta) \mu(d\beta) .
\end{split}\end{equation}
 Then $c^{-1}N^{c}_t\Rightarrow E_t$, the inverse subordinator \cite[Theorem 3.10]{M-S-ultra}.  The general
 infinitely divisible L\'evy process limit $A(t)$ forms a strongly continuous convolution semigroup with
 generator $L$ (e.g., see \cite{ABHN}) and the corresponding CTRW scaling limit $A(E_t)$ is the stochastic
 solution to the distributed order-fractional Cauchy problem \cite[Eq.\
 (5.12)]{M-S-ultra}defined by
\begin{equation}\label{DOFCPdef}
\D^{(\nu)} u(t,x)=L u(t,x),
\end{equation}
where the distributed order fractional derivative
\begin{equation}\label{DOFDdef}
\D^{(\nu)}u(t,x):=\int_0^1\frac{\partial^\beta u(t,x)}{\partial
t^\beta} \nu(d\beta) ,\quad \text{ $\nu(d\beta)=\Gamma(1-\beta)\mu(d\beta)$.}
\end{equation}
To ensure that $\D^{(\nu)}$ is well-defined, we impose the condition
\begin{equation}\label{finite-mu-bound}
\int_0^1  \frac 1{1-\beta}\, \mu(d\beta)<\infty
\end{equation}
as in  \cite[Eq.\ (3.3)]{M-S-ultra}.  Since $\Gamma(x)\sim 1/x$, as $x\to 0+$, this ensures that $\nu(d\beta)$ is a finite measure on $(0,1)$.

Using triangular array limits for CTRW allows a more flexible limit model.  For example, suppose $Y_i$ are
i.i.d.\ with mean $\mu$ and variance $\sigma^2$, and let $Y_i^{c}=c^{-1}\mu+c^{-1/2}(T_i-\mu)$ so that
 $X^{c}(cu)\Rightarrow A(t)$ a Brownian motion with drift.  Then the density $u(t,x)$ of the CTRW scaling
 limit $A(E_t)$ solves \eqref{DOFCPdef} with  \[L=-v\frac{\partial }{\partial x}+a\frac{\partial^2 }
 {\partial x^2},\]
for some $a>0$.  A triangular array of jumps with two spatial scales, one for the mean jump and another for the
 deviation from the mean, is necessary to get a drift term in the limit.


Since $\phi_W(0,\infty)=\infty$ in \eqref{phiWdef}, Theorem 3.1
in \cite{M-S-triangular} implies that the inverse subordinator $E_t$ has
density
\[
g(t,x)=\int_0^t \phi_W(t-y,\infty) P_{W(x)}(dy) .
\]
This same condition  ensures also that $E_t$ is almost surely
continuous, since $W_t$ jumps in every interval, and hence is
strictly increasing. Further, it follows from the definition
\eqref{Etdef} that $E_t$ is monotone nondecreasing.





The following lemma shows that $ h(t, \lambda)=\E [e^{-\lambda
E_t}]$ is an eigenfunction of the
 distributed-order fractional derivative $\D^{(\nu)}$.

\begin{lemma}\label{eigenvalue-problem}
For any $\lambda>0$, $h(t, \lambda)=\int_0^\infty
e^{-\lambda x}g(t,x)\,dx=\E [e^{-\lambda E_t}]$ satisfies
\begin{equation}\label{dist-order-density-pde}
\D^{(\nu)}h(t,\lambda)=-\lambda h(t, \lambda); \ \ h(0, \lambda)=1.
\end{equation}
\end{lemma}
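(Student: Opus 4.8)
The plan is to take Laplace transforms in the time variable $t$, evaluate both sides of \eqref{dist-order-density-pde} on the transform side using the Caputo rule \eqref{CaputoLT} and the identity \eqref{psiW}, and then invert by uniqueness of the Laplace transform. First note that $h(0,\lambda)=1$: since $\phi_W(0,\infty)=\infty$ the subordinator $W_t$ is strictly increasing off the origin, so $E_0=\inf\{\tau:W_\tau>0\}=0$ by \eqref{Etdef}, whence $h(0,\lambda)=\E[e^{-\lambda E_0}]=1$; moreover $t\mapsto h(t,\lambda)$ is continuous on $[0,\infty)$ by dominated convergence and the a.s.\ continuity of $E_t$.

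Next I would compute $\tilde h(s,\lambda)=\int_0^\infty e^{-st}h(t,\lambda)\,dt$. Viewing $g(t,x)=\int_0^t \phi_W(t-y,\infty)\,P_{W(x)}(dy)$ as the convolution in $t$ of $y\mapsto\phi_W(y,\infty)$ with the law of $W_x$, its $t$-Laplace transform factors as
\[
\widetilde g(s,x)=\Big(\int_0^\infty e^{-st}\phi_W(t,\infty)\,dt\Big)\E[e^{-sW_x}].
\]
By \eqref{psiWdef}, Tonelli, the elementary identity $\int_0^\infty e^{-st}t^{-\beta}\,dt=\Gamma(1-\beta)s^{\beta-1}$, and \eqref{psiW}, the first factor equals $\psi_W(s)/s$; since $\E[e^{-sW_x}]=e^{-x\psi_W(s)}$, integrating $e^{-\lambda x}\widetilde g(s,x)$ over $x>0$ (Fubini is legitimate as all integrands are nonnegative) gives
\[
\tilde h(s,\lambda)=\frac{\psi_W(s)}{s}\int_0^\infty e^{-(\lambda+\psi_W(s))x}\,dx=\frac{\psi_W(s)}{s\,(\lambda+\psi_W(s))}.
\]

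Applying the Laplace transform to $\D^{(\nu)}h(\cdot,\lambda)=\int_0^1 \partial_t^\beta h(\cdot,\lambda)\,\nu(d\beta)$, using \eqref{CaputoLT} inside the $\beta$-integral with $h(0,\lambda)=1$ and interchanging $\int_0^1(\cdot)\,\nu(d\beta)$ with the transform (permissible since $\nu$ is finite by \eqref{finite-mu-bound}), one obtains $\big(\int_0^1 s^\beta\nu(d\beta)\big)\tilde h(s,\lambda)-\tfrac1s\int_0^1 s^\beta\nu(d\beta)$. By \eqref{psiW} and $\nu(d\beta)=\Gamma(1-\beta)\mu(d\beta)$ we have $\int_0^1 s^\beta\nu(d\beta)=\psi_W(s)$, so this equals
\[
\psi_W(s)\,\tilde h(s,\lambda)-\frac{\psi_W(s)}{s}=\frac{\psi_W(s)}{s}\Big(\frac{\psi_W(s)}{\lambda+\psi_W(s)}-1\Big)=-\lambda\,\frac{\psi_W(s)}{s(\lambda+\psi_W(s))}=-\lambda\,\tilde h(s,\lambda),
\]
which is exactly the Laplace transform of $t\mapsto-\lambda h(t,\lambda)$. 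Since $\D^{(\nu)}h(\cdot,\lambda)$ and $-\lambda h(\cdot,\lambda)$ are both continuous in $t$ and have the same Laplace transform, they coincide, proving \eqref{dist-order-density-pde}.

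The main obstacle is not this formal calculation but the technical groundwork making it rigorous: one must know that $t\mapsto h(t,\lambda)$ is absolutely continuous on $[0,\infty)$ with a locally integrable derivative of at most exponential growth, that each Caputo integral $\int_0^t(t-r)^{-\beta}\partial_r h(r,\lambda)\,dr$ is finite, that these bounds are uniform enough in $\beta\in(0,1)$ to justify interchanging the $\beta$-integral with differentiation and with the Laplace transform, and that $\D^{(\nu)}h(\cdot,\lambda)$ is continuous so that Laplace inversion applies. These are precisely the regularity properties of the density $g(t,x)$, and hence of $h$, established by Kochubei \cite{koch3}; I would invoke them, using \eqref{finite-mu-bound} to guarantee that $\nu$ is finite so the $\beta$-integrals of the resulting bounds converge.
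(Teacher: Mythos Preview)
Your proposal is correct and follows essentially the same route as the paper: compute $\tilde h(s,\lambda)=\psi_W(s)/[s(\lambda+\psi_W(s))]$ from the $t$-Laplace transform of $g(t,x)$, take the Laplace transform of $\D^{(\nu)}h$ via \eqref{CaputoLT} to get $(\tilde h(s,\lambda)-1/s)\psi_W(s)$, observe the algebraic identity with $-\lambda\tilde h(s,\lambda)$, and invert by uniqueness using the continuity of $t\mapsto h(t,\lambda)$. The only differences are cosmetic: the paper cites the formula $\tilde g(s,x)=s^{-1}\psi_W(s)e^{-x\psi_W(s)}$ from \cite{M-S-triangular} rather than re-deriving it from the convolution representation as you do, and the paper defers the regularity concerns in your final paragraph to the separate Lemma~\ref{derivative-bound} rather than folding them into this argument.
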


\begin{proof} First note that $h(0, \lambda) =\mathbb{E}(1) = 1$.
Using \eqref{CaputoLT} and \eqref{DOFDdef}, compute the Laplace
transform of $\D^{(\nu)}h(t,\lambda)$
as
\begin{equation}\label{eqn3.14n}\begin{split}
\int_{0}^{\infty} e^{-s t}\,\D^{(\nu)} h(t,\lambda)dt &= \int_0^\infty e^{-st} \int_0^1
\frac{\partial^\beta h(t,\lambda)}{\partial
t^\beta} \nu(d\beta)dt\\
&=\int_0^1 \int_0^\infty e^{-st}  \frac{\partial^\beta h(t,\lambda)}{\partial
t^\beta}dt \nu(d\beta)\\
&=\int_0^1 (s^\beta\tilde{h}(s, \lambda)-s^{\beta-1}) \nu(d\beta)\\
&=\left(\tilde{h}(s, \lambda) -\frac{1}{s}\right)\psi_W(s),
\end{split}
\end{equation}
by applying a Fubini argument which holds because $\psi_W(s)<\infty$.

The Laplace transform of $g(t,x)$ is given by \cite[Eq.\ (3.13)]{M-S-triangular}:
\begin{equation}\label{laplace-distributed-inverse}
\tilde g(s,x)=\int_0^\infty e^{-s t}g(t,x)dt=\frac{1}{s}\psi_W(s)e^{-x\psi_W(s)}.
\end{equation}
Then the double Laplace transform
\begin{eqnarray}
\tilde h(s,\lambda):=\int_0^\infty e^{-s t}h(t,\lambda)dt
&=& \int_0^\infty e^{-\lambda t}\bigg(\int_0^\infty
e^{-\lambda x}g(t,x)dx\bigg)dt\nonumber\\
&=& \int_0^\infty e^{-\lambda x}\bigg(\int_0^\infty e^{-s t} g(t,x)dt\bigg)dx\nonumber\\
&=& \frac{\psi_W(s)}{s} \int_0^\infty e^{-(\lambda + \psi_W(s))x}dx\label{double-laplace}\\
&=& \frac{\psi_W(s)}{s(\lambda+\psi_W(s))} \label{eqn3.20v3} .
\end{eqnarray}
That is,
$\tilde{h}(s, \lambda)$ satisfies
\begin{equation}\label{eqn3.16n}
\lambda\tilde{h}(s,\lambda)=
\Bigl(\frac{1}{s}-\tilde{h}(s,\lambda)\Bigr)\psi_W(s).
\end{equation}
Since  $E_t$ has continuous paths, the dominated convergence
theorem implies that
 $t\to\E [e^{-\lambda E(t)}]=h(t, \lambda)$ is a continuous function.
Then \eqref{dist-order-density-pde} follows from (\ref{eqn3.14n}),
(\ref{eqn3.16n}) and the uniqueness of the Laplace transform.
\end{proof}

\begin{lemma}\label{derivative-bound}
Suppose that $\mu(d\beta)=p(\beta)d\beta$,
  the function $\beta\mapsto \Gamma(1-\beta)p(\beta)$ is in $C^1[0,1]$, $supp (\mu)=[\beta_0,\beta_1]\subset (0,1)$
   and $\mu(\beta_1)\neq 0$.
Suppose also that 
\begin{equation} \label{eqn3.2v3}
 C( \beta_0, \beta_1,p)= \int_{\beta_0}^{\beta_1}\sin
(\beta\pi)\Gamma(1-\beta)p(\beta)d\beta>0.
\end{equation}
Then $|\partial_t h(t, \lambda)|\leq \lambda k(t)$, where
\begin{equation}\label{time-bound-derivative}
k(t)=[C(\beta_0,\beta_1,p)\pi]^{-1}[\Gamma(1-\beta_1)t^{\beta_1-1}+\Gamma(1-\beta_0)t^{\beta_0-1}].
\end{equation}
In this case, $h(t,\lambda)$ is a classical solution to \eqref{dist-order-density-pde}.
\end{lemma}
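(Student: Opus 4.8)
The plan is to invert the double Laplace transform \eqref{eqn3.20v3} along a Hankel contour, obtaining an explicit integral formula for $\partial_t h(t,\lambda)$, and then estimate that formula by splitting the integration variable into the two regimes $\rho\le1$ and $\rho\ge1$. Write $w(\beta)=\Gamma(1-\beta)p(\beta)$, so that by \eqref{psiW} (using $\mathrm{supp}\,\mu=[\beta_0,\beta_1]$) one has $\psi_W(s)=\int_{\beta_0}^{\beta_1}s^\beta w(\beta)\,d\beta$, which extends holomorphically to the cut plane $\C\setminus(-\infty,0]$. I would first record that $\tilde h(s,\lambda)=\psi_W(s)/[s(\lambda+\psi_W(s))]$ has no pole there: for $s=\rho e^{i\theta}$ with $0<|\theta|<\pi$ and $\beta\in[\beta_0,\beta_1]$ we have $0<|\beta\theta|<\pi$, so $\Im\psi_W(s)$ has the sign of $\theta$ and $\psi_W(s)$ is never a negative real number off the cut, while $\psi_W>0$ on $(0,\infty)$; hence $\lambda+\psi_W(s)\neq0$ on the cut plane. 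Using that $|\psi_W(\rho e^{i\theta})|\ge c\,\rho^{\beta_0}$ for $\rho\ge1$ uniformly in $\theta$ (the vectors $\rho^\beta e^{i\beta\theta}$, $\beta\in[\beta_0,\beta_1]$, lie in a sector of opening $\le(\beta_1-\beta_0)\pi<\pi$, hence add up in norm up to a fixed factor) and $|\psi_W(s)|\le(\int w)\,|s|^{\beta_0}$ for $|s|\le1$, I would check that the large and small arcs in the contour deformation drop out for $t>0$, giving
\begin{equation*}
h(t,\lambda)=-\frac1\pi\int_0^\infty e^{-\rho t}\,\Im\!\big[\tilde h(\rho e^{i\pi},\lambda)\big]\,d\rho .
\end{equation*}
Differentiating under the integral sign (again justified for $t>0$ by dominated convergence, using the lower bound on $\Im\psi_W$ established below) and simplifying with $\tilde h(\rho e^{i\pi},\lambda)=-\psi_W(\rho e^{i\pi})/[\rho(\lambda+\psi_W(\rho e^{i\pi}))]$ and $\psi_W/(\lambda+\psi_W)=1-\lambda/(\lambda+\psi_W)$ produces the representation
\begin{equation*}
\partial_t h(t,\lambda)=\frac{\lambda}{\pi}\int_0^\infty e^{-\rho t}\,\Im\!\Big[\frac{1}{\lambda+\psi_W(\rho e^{i\pi})}\Big]\,d\rho .
\end{equation*}

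With this in hand the estimate is routine. Put $P(\rho)=\Re\psi_W(\rho e^{i\pi})$ and $Q(\rho)=\Im\psi_W(\rho e^{i\pi})=\int_{\beta_0}^{\beta_1}\rho^\beta\sin(\beta\pi)w(\beta)\,d\beta$, which is positive. Then $\big|\Im[1/(\lambda+\psi_W(\rho e^{i\pi}))]\big|=Q(\rho)/[(\lambda+P(\rho))^2+Q(\rho)^2]\le 1/Q(\rho)$. Since $\rho^\beta\ge\rho^{\beta_0}$ for $\rho\ge1$ and $\rho^\beta\ge\rho^{\beta_1}$ for $0<\rho\le1$, uniformly on $[\beta_0,\beta_1]$, and $C(\beta_0,\beta_1,p)=\int_{\beta_0}^{\beta_1}\sin(\beta\pi)w(\beta)\,d\beta>0$ by \eqref{eqn3.2v3}, I get $Q(\rho)\ge C(\beta_0,\beta_1,p)\rho^{\beta_0}$ for $\rho\ge1$ and $Q(\rho)\ge C(\beta_0,\beta_1,p)\rho^{\beta_1}$ for $0<\rho\le1$, hence $1/Q(\rho)\le C(\beta_0,\beta_1,p)^{-1}(\rho^{-\beta_0}+\rho^{-\beta_1})$ for every $\rho>0$. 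Plugging this into the representation and using $\int_0^\infty e^{-\rho t}\rho^{-\beta_j}\,d\rho=\Gamma(1-\beta_j)t^{\beta_j-1}$ gives
\begin{equation*}
|\partial_t h(t,\lambda)|\le\frac{\lambda}{\pi\,C(\beta_0,\beta_1,p)}\big[\Gamma(1-\beta_0)t^{\beta_0-1}+\Gamma(1-\beta_1)t^{\beta_1-1}\big]=\lambda k(t),
\end{equation*}
which is \eqref{time-bound-derivative}.

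For the final claim, the bound shows $r\mapsto\partial_r h(r,\lambda)$ is integrable near $r=0$, since $k(r)=O(r^{\beta_0-1})$ with $\beta_0>0$. Hence for every $\beta\in(0,1)$ the Caputo derivative $\partial^\beta h(t,\lambda)/\partial t^\beta$ in \eqref{CaputoDef} is an absolutely convergent integral; the substitution $r=ts$ together with $\int_0^t r^{\beta_0-1}(t-r)^{-\beta}\,dr=B(\beta_0,1-\beta)t^{\beta_0-\beta}$ shows it is finite, continuous in $t>0$, and bounded over $\beta\in[\beta_0,\beta_1]$ on compact $t$-intervals. Since $\nu(d\beta)=\Gamma(1-\beta)\mu(d\beta)$ is a finite measure supported in $[\beta_0,\beta_1]$, the integral $\D^{(\nu)}h(t,\lambda)$ in \eqref{DOFDdef} is then a well-defined continuous function of $t$; by Lemma \ref{eigenvalue-problem} its Laplace transform agrees with that of $-\lambda h(t,\lambda)$, so uniqueness of Laplace transforms upgrades \eqref{dist-order-density-pde} from an identity of Laplace transforms to a pointwise one, i.e.\ $h(\cdot,\lambda)$ solves \eqref{dist-order-density-pde} classically.

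The hard part will be the rigorous justification of the contour deformation — the analytic continuation of $\psi_W$, its precise growth at infinity, and the existence and regularity of the boundary values $\psi_W(\rho e^{\pm i\pi})$ — which is exactly where the technical machinery of Kochubei \cite{koch3} enters; the standing hypotheses that $\beta\mapsto\Gamma(1-\beta)p(\beta)$ is $C^1$ and does not vanish at $\beta_1$ are there to supply these asymptotics. By contrast, the absence of poles of $1/(\lambda+\psi_W)$ off the cut is elementary as indicated, and everything downstream of the representation formula is the two-regime estimate above.
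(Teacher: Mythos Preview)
Your proposal is correct and follows essentially the same route as the paper: the paper cites Kochubei's formula (2.19) for the Hankel-contour representation of $h(t,\lambda)$ (which you sketch yourself), differentiates under the integral, bounds the resulting kernel $\Phi(r,1)=Q(r)/[(\lambda+P(r))^2+Q(r)^2]$ by $1/Q(r)$, and then performs exactly your two-regime split $r\le1$ versus $r\ge1$ using $r^\beta\ge r^{\beta_1}$ and $r^\beta\ge r^{\beta_0}$ respectively. The only cosmetic difference is that you reach the derivative formula via the identity $\psi_W/(\lambda+\psi_W)=1-\lambda/(\lambda+\psi_W)$, whereas the paper differentiates the representation for $h$ directly; the resulting integrand and the subsequent estimates are identical.
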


\begin{proof}

Using  (2.19) in Kochubei \cite{koch3}, which follows from inverting the
Laplace transform in \eqref{double-laplace} of $h(t,\lambda)$, we
have
\begin{equation}
h(t,\lambda)=\frac{-\lambda}{\pi}\int_0^\infty r^{-1}e^{-tr}\Phi(r,1)dr
\end{equation}
where
$$
\Phi(r,1)=\frac{\int_0^1r^\beta\sin
(\beta\pi)\Gamma(1-\beta)p(\beta)d\beta}{[\int_0^1r^\beta \cos
(\beta\pi)\Gamma(1-\beta)p(\beta)d\beta+\lambda]^2+[\int_0^1r^\beta\sin
(\beta\pi) \Gamma(1-\beta)p(\beta)d\beta]^2}.
$$
First we show that $| \partial_t h(t, \lambda)| < \infty$. Note
that
\begin{equation}
\begin{split}
&|\partial_t h(\lambda,t)|=\left|\frac{-\lambda}{\pi}\int_0^\infty r^{-1}[\partial_te^{-tr}]\Phi(r,1)dr \right|\\
&=\frac{\lambda}{\pi}\int_0^\infty e^{-tr}\Phi(r,1)dr\\
&=\frac{\lambda}{\pi}\int_0^\infty \frac{ e^{-tr}\int_0^1r^\beta\sin (\beta\pi)
\Gamma(1-\beta)p(\beta)d\beta}{[\int_0^1r^\beta\cos (\beta\pi)\Gamma(1-\beta)
p(\beta)d\beta+\lambda]^2+[\int_0^1r^\beta\sin (\beta\pi)\Gamma(1-\beta)p(\beta)d\beta]^2}dr\\
&\leq \lambda \pi^{-1} \int_0^\infty\frac{e^{-tr}dr}{\int_0^1r^\beta\sin (\beta\pi)\Gamma(1-\beta)p(\beta)d\beta}\\
&=\lambda l(t), ~\mbox{(say)},
\end{split}
\end{equation}
 where $l(t)$ is a function of $t$ only.  In the case of a simple fractional derivative this $l(t)$ is given
 by $Ct^{\beta-1}$.


 Now,
\begin{equation}
\int_{0}^{1}\sin (\beta\pi)\Gamma(1-\beta)p(\beta)d\beta\geq
\int_{\beta_0}^{\beta_1} \sin
(\beta\pi)\Gamma(1-\beta)p(\beta)d\beta= C(\beta_0, \beta_1,p)>0,
\end{equation}
by assumption (\ref{eqn3.2v3}).


For $r>1$, and $\beta_0\leq \beta\leq \beta_1\leq 1,$ we have
$r^{\beta_0}\leq r^{\beta}\leq r^{\beta_1}$ and so
\begin{equation}
\begin{split}
\int_{\beta_0}^{\beta_1}r^\beta\sin (\beta\pi)\Gamma(1-\beta)p(\beta)d\beta & \geq
\int_{\beta_0}^{\beta_1}r^{\beta_0}\sin (\beta\pi)\Gamma(1-\beta)p(\beta)d\beta\\
&=r^{\beta_0}C(\beta_0,\beta_1,p).
\end{split}
\end{equation}
For $0<r\leq 1$, and $\beta_0\leq \beta\leq \beta_1\leq 1,$ we
have $r^{\beta_0}\geq r^{\beta}\geq r^{\beta_1}$ and so
\begin{equation}
\begin{split}
\int_{\beta_0}^{\beta_1}r^\beta\sin (\beta\pi)\Gamma(1-\beta)p(\beta)d\beta & \geq
\int_{\beta_0}^{\beta_1}r^{\beta_1}\sin (\beta\pi)\Gamma(1-\beta)p(\beta)d\beta\\
&=r^{\beta_1}C(\beta_0,\beta_1,p).
\end{split}
\end{equation}
Using the above facts, we obtain
\begin{equation*}
\begin{split}
l(t)&=\pi^{-1} \int_0^\infty\frac{e^{-tr}dr}{\int_0^1r^\beta\sin (\beta\pi)\Gamma(1-\beta)p(\beta)d\beta}\\
&=\pi^{-1}\bigg[\int_0^1\frac{e^{-tr}dr}{\int_0^1r^\beta\sin (\beta\pi)\Gamma(1-\beta)p(\beta)d\beta}
+\int_1^\infty\frac{e^{-tr}dr}{\int_0^1r^\beta\sin (\beta\pi)\Gamma(1-\beta)p(\beta)d\beta}\bigg]\\
&\leq \pi^{-1}\bigg[\int_0^1\frac{e^{-tr}dr}{\int_{\beta_0}^{\beta_1}r^\beta\sin (\beta\pi)
\Gamma(1-\beta)p(\beta)d\beta}+\int_1^\infty\frac{e^{-tr}dr}{\int_{\beta_0}^{\beta_1}r^\beta
\sin (\beta\pi)\Gamma(1-\beta)p(\beta)d\beta}\bigg]\\
&\leq [C(\beta_0,\beta_1,p)\pi]^{-1}\bigg[\int_0^1r^{-\beta_1}e^{-tr}dr+\int_1^\infty r^{-\beta_0}e^{-tr}dr\bigg]\\
&\leq [C(\beta_0,\beta_1,p)\pi]^{-1}[\Gamma(1-\beta_1)t^{\beta_1-1}+\Gamma(1-\beta_0)t^{\beta_0-1}]=k(t)
\end{split}
\end{equation*}
and so $|\partial_t h(t, \lambda)|\leq \lambda k(t)$. Hence, it
follows from (\ref{DOFDdef}) that
\begin{equation*}\begin{split}
|\D^{(\nu)}h(t,\lambda)|&\leq  \left|\int_0^1\frac{\partial^\beta}{\partial t^\beta}h(t,\lambda) \Gamma(1-\beta)p(\beta)d\beta\right|\\
&\leq \int_0^1\frac{1}{\Gamma(1-\beta)}\int_0^t \left|\frac{\partial h(s,\lambda)}{\partial s}\right|\frac{ds}{(t-s)^\beta} \Gamma(1-\beta)p(\beta)d\beta \\
&\leq \int_0^1\frac{1}{\Gamma(1-\beta)}\int_0^t k(s)\frac{ds}{(t-s)^\beta} \Gamma(1-\beta)p(\beta)d\beta \\
&<\infty,
\end{split}\end{equation*}
using \eqref{finite-mu-bound} and the beta density formula. Thus,
the distributed-order derivative $\D^{(\nu)}h(t,\lambda)$ is
defined in the classical sense.
\end{proof}

\section{Distributed order fractional Cauchy problems}\label{sec4}
Fractional Cauchy problems replace the usual first-order time
derivative with its fractional analogue.  In this section, we
prove classical (strong) solutions to distributed-order fractional
Cauchy problems $\D^{(\nu)} u=L u$ on bounded domains $D \subset
\rd$.  We  give also an explicit solution formula, based on the
solution of the corresponding Cauchy problem.  Our methods are
inspired by the approach in \cite{Zsolution}, where Laplace
transforms are used to handle the fractional time derivative, and
spatial derivative operators (or more generally,
pseudo-differential operators) are treated with Fourier
transforms.  In the present paper, we use an eigenfunction
expansion in place of Fourier transforms, since we are operating
on a bounded domain.  Our first result, Theorem \ref{laplace-pde},
is focused on a distributed-order fractional diffusion with
$L=\Delta$, and we lay out all the details of the argument in the
most familiar setting.  Then, in Theorem \ref{FC-PDE-Lx}, we use
separation of variables to extend this approach to uniformly
elliptic generators $L$. In the process, we explicate the
stochastic solutions in terms of killed Markov processes.



Let $D_\infty=(0,\infty)\times D$ and  define
\begin{eqnarray}
\mathcal{H}_\Delta(D_\infty)&\equiv & \left\{u:D_\infty\to \rr :
\Delta u\in C(D_\infty),\right.\nonumber\\
& &\left. \left|\partial_t u(t,x)\right|\leq k(t)g(x),\ g\in
L^\infty(D), \ t>0 \right\},\nonumber
\end{eqnarray}
where $k(t)$ is defined by \eqref{time-bound-derivative}.




We will write $u\in C^k(\bar D)$ to mean that for each fixed
$t>0$, $u(t,\cdot)\in C^k(\bar D)$, and
 $u\in C_b^k(\bar D_\infty)$ to mean that $u\in C^k(\bar D_\infty)$ and is bounded.
Let $\tau_D(X)= \inf\{ t\geq 0: X(t)\notin D\}$ denote the first
exit time of the stochastic process $X= \{X(t)\}$.

 \begin{theorem}\label{laplace-pde}
Let $D$ be a bounded domain with $\partial D \in
 C^{1,\alpha}$ for some $0<\alpha<1$, and
  $T_D(t)$  be the killed semigroup of Brownian motion  $\{X(t)\}$ on $D$.  Let $E_t$ be the inverse \eqref{Etdef}
of the subordinator $W_t$, independent of $\{X(t)\}$, with L\'evy
measure \eqref{psiWdef}. Suppose that
 $\mu(d\beta)=p(\beta)d\beta$, as in Lemma \ref{derivative-bound},
 and $\D^{(\nu)}$ is the distributed-order fractional derivative  defined by
 \eqref{DOFDdef}.
Then, for any $f\in D(\Delta_D)\cap C^1(\bar D)\cap C^2(D)$ for
which the
  eigenfunction expansion (of $\Delta f$) with respect to the complete orthonormal basis
   $\{\phi_n:\ n\in \N \}$ converges uniformly and absolutely,
the unique (classical) solution of the distributed order fractional
Cauchy problem
\begin{eqnarray}
\D^{(\nu)}u(t,x) &=&
\Delta u(t,x);  \  \ x\in D, \ t>0\label{frac-derivative-bounded-d}\\
u(t,x)&=&0, \ x\in \partial D, \ t>0, \nonumber\\
u(0,x)& =& f(x), \ x\in D,\nonumber
\end{eqnarray}
for  $u  \in
\mathcal{H}_\Delta(D_\infty)\cap C_b(\bar D_\infty) \cap C^1(\bar D)$,
is given by
\begin{eqnarray}
 u(t,x)&=&E_{x}[f(X(E_{t}))I( \tau_D(X)> E_t)]\nonumber\\
&=& \int_{0}^{\infty}T_D(l)f(x)g(t,l)dl \label{solution-laplacian-frac-cauchy}\\
&=&\sum_{n=1}^{\infty}\bar{f}(n)\phi_n(x)h(t,\lambda_n),
\end{eqnarray}
where $h(t,\lambda)=\E (e^{-\lambda E_t})=\int_0^\infty
e^{-\lambda x}g(t,x)\,dx$ is the Laplace transform of $E_t$.
\end{theorem}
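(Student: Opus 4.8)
The plan is to take the eigenfunction series $u(t,x)=\sum_{n}\bar f(n)\phi_n(x)h(t,\lambda_n)$ as the primary object --- it is exactly the object that Lemmas \ref{eigenvalue-problem} and \ref{derivative-bound} are built to handle --- show it solves the problem, read off the other two formulas, and then prove uniqueness in the stated class. For the equivalence of the three formulas: since $T_D(l)f(x)=\sum_n\bar f(n)\phi_n(x)e^{-\lambda_n l}$ (converging uniformly on $[l_0,\infty)\times D$) and $h(t,\lambda_n)=\int_0^\infty e^{-\lambda_n l}g(t,l)\,dl$, a Tonelli argument (using $g(t,\cdot)\ge 0$ with total mass $1$, $\sum_n|\bar f(n)|e^{-\lambda_n l}<\infty$, and $|\phi_n|\le\|\phi_n\|_\infty$) gives $\int_0^\infty T_D(l)f(x)g(t,l)\,dl=\sum_n\bar f(n)\phi_n(x)h(t,\lambda_n)$. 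Conditioning on $E_t$, which has density $g(t,\cdot)$ (as noted after \eqref{Etdef}) and is independent of $\{X(t)\}$, then yields $E_x[f(X(E_t))I(\tau_D(X)>E_t)]=\int_0^\infty E_x[f(X(l))I(\tau_D(X)>l)]g(t,l)\,dl=\int_0^\infty T_D(l)f(x)g(t,l)\,dl$.

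Next I would verify that this $u$ solves \eqref{frac-derivative-bounded-d}. The initial condition follows from $h(0,\lambda_n)=1$ (Lemma \ref{eigenvalue-problem}) and completeness of $\{\phi_n\}$, so $u(0,\cdot)=\sum_n\bar f(n)\phi_n=f$. For the spatial side, apply $\Delta$ termwise using $\Delta\phi_n=-\lambda_n\phi_n$: the resulting series $\sum_n\overline{\Delta f}(n)\phi_n(x)h(t,\lambda_n)$ converges uniformly and absolutely since $\sum_n\overline{\Delta f}(n)\phi_n(x)=-\sum_n\lambda_n\bar f(n)\phi_n(x)$ does by hypothesis and $|h(t,\lambda_n)|\le 1$; hence $\Delta u\in C(D_\infty)$, the series represents $\Delta u$, and the Dirichlet condition holds right up to $\partial D$. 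For the time side, Lemma \ref{derivative-bound} gives $|\partial_t h(t,\lambda_n)|\le\lambda_n k(t)$, so differentiating termwise $|\partial_t u(t,x)|\le k(t)\sum_n\lambda_n|\bar f(n)||\phi_n(x)|\le k(t)g(x)$ with $g\in L^\infty(D)$ --- this places $u$ in $\mathcal{H}_\Delta(D_\infty)$ and legitimizes the Fubini/dominated-convergence interchange bringing $\D^{(\nu)}$ inside the sum, so $\D^{(\nu)}u(t,x)=\sum_n\bar f(n)\phi_n(x)\,\D^{(\nu)}h(t,\lambda_n)=-\sum_n\lambda_n\bar f(n)\phi_n(x)h(t,\lambda_n)=\Delta u(t,x)$, the middle equality by Lemma \ref{eigenvalue-problem} (which applies since $\mu=p\,d\beta$ satisfies the hypotheses of Lemma \ref{derivative-bound}). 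Boundedness and continuity of $u$ on $\bar D_\infty$, and the $C^1(\bar D)$ regularity, come from uniform convergence of the series together with the regularity of the $\phi_n$ recorded in the Remark (using $\partial D\in C^{1,\alpha}$).

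For uniqueness, let $u$ be any solution in $\mathcal{H}_\Delta(D_\infty)\cap C_b(\bar D_\infty)\cap C^1(\bar D)$ and set $\bar u(t,n)=\int_D\phi_n(x)u(t,x)\,dx$. The bound $|\partial_t u(t,x)|\le k(t)g(x)$, $g\in L^\infty(D)$, lets Fubini pass $\int_D\phi_n(x)\,dx$ through the Caputo integral and the $\nu$-integral, so $\overline{\D^{(\nu)}u}(t,n)=\D^{(\nu)}\bar u(t,n)$; and Green's second identity with Dirichlet data (valid since $u(t,\cdot)\in C^1(\bar D)\cap C^2(D)$ and $\phi_n\in C^1(\bar D)$, both vanishing on $\partial D$) gives $\int_D\phi_n(x)\Delta u(t,x)\,dx=-\lambda_n\bar u(t,n)$. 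Hence $\D^{(\nu)}\bar u(t,n)=-\lambda_n\bar u(t,n)$ with $\bar u(0,n)=\bar f(n)$; taking Laplace transforms exactly as in \eqref{eqn3.14n} gives $\tilde{\bar u}(s,n)=\bar f(n)\,\psi_W(s)/\big(s(\lambda_n+\psi_W(s))\big)=\bar f(n)\tilde h(s,\lambda_n)$ by \eqref{eqn3.20v3}, so $\bar u(t,n)=\bar f(n)h(t,\lambda_n)$ by uniqueness of the Laplace transform and continuity in $t$, and therefore $u(t,x)=\sum_n\bar u(t,n)\phi_n(x)=\sum_n\bar f(n)\phi_n(x)h(t,\lambda_n)$, the series above.

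The main obstacle is not any single computation but the careful justification of all these interchanges: applying $\Delta$ and especially the nonlocal operator $\D^{(\nu)}$ term by term to the eigenfunction series, and moving the $\phi_n$-integral past $\D^{(\nu)}$ in the uniqueness step. This is precisely what the hypotheses --- uniform and absolute convergence of the expansion of $\Delta f$, the derivative bound of Lemma \ref{derivative-bound}, and membership in $\mathcal{H}_\Delta(D_\infty)$ --- are designed to furnish; assembling them cleanly, together with the boundary behaviour supplied by $\partial D\in C^{1,\alpha}$ and the eigenfunction regularity from the Remark, is where the real work of the proof lies.
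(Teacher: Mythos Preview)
Your proposal is correct and follows essentially the same route as the paper: posit the eigenfunction series, verify via Lemmas \ref{eigenvalue-problem} and \ref{derivative-bound} that it solves \eqref{frac-derivative-bounded-d} with the required regularity (termwise application of $\Delta$ and $\D^{(\nu)}$ justified by the absolute/uniform convergence hypothesis and the bound $|\partial_t h|\le\lambda k(t)$), read off the integral and stochastic formulas, and prove uniqueness by projecting onto $\phi_n$, commuting $\D^{(\nu)}$ with the $\phi_n$-integral via Fubini (using the $\mathcal H_\Delta$ bound), and identifying $\bar u(t,n)$ by Laplace transform. The only organizational difference is that the paper first \emph{derives} the candidate by applying the $\phi_n$- and Laplace transforms to an assumed solution, then verifies; you start from the candidate directly and fold the derivation into the uniqueness step --- the mathematical content, including the $C^1(\bar D)$ argument via the Gilbarg--Trudinger bound \eqref{uniform-eigenvalue-bounds}, is the same.
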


\begin{proof}
 Assume that $u(t,x)$ solves
\eqref{frac-derivative-bounded-d}. Using Green's second identity,
we obtain
\begin{equation*}
\int_D [u\Delta \phi_n-\phi_n\Delta u]dx=\int_{\partial
D}\left[u\frac{\partial \phi_n}{\partial
\theta}-\phi_n\frac{\partial u}{\partial \theta}\right]ds=0,
\end{equation*}
since $u|_{\partial D}=0=\phi_n|_{\partial D}$ and $u,\phi_n\in
C^1(\bar{D})$. Hence, the $\phi_n$-transform of $\Delta u$ is
$$\int_D \phi_n(x) \Delta u(t,x)  dx=\int_D  u(t,x)\Delta \phi_n(x) dx=
-\lambda_n\int_D  u(t,x) \phi_n(x) dx = -\lambda_n\bar{u}(t,n),$$
as $\phi_n$ is the eigenfunction of the Laplacian corresponding to
 eigenvalue $\lambda_n$.













\noindent Next we need to show that the $\phi_n$ transform
commutes with $\D^{(\nu)}$.
We need to show that  we can interchange derivatives and integrals
as follows. Observe that
\begin{equation}\label{real-fubini}
\begin{split}
&\int_D \phi_n(x) \D^{(\nu)}u(t,x) dx\\
&=\int_D \phi_n(x) \int_0^1\frac{\partial^\beta}{\partial t^\beta}u(t,x) \Gamma(1-\beta)p(\beta)d\beta dx\\
&=\int_D \phi_n(x) \int_0^1\frac{1}{\Gamma(1-\beta)}\int_0^t \frac{\partial u(s,x)}{\partial s}\frac{ds}{(t-s)^\beta} \Gamma(1-\beta)p(\beta)d\beta dx\\
&=\int_D \phi_n(x) \int_0^1 \int_0^t \frac{\partial u(s,x)}{\partial s}\frac{ds}{(t-s)^\beta} p(\beta)d\beta dx\\
&=\int_0^1 \int_0^t \left(\int_D \phi_n(x) \frac{\partial }{\partial s}u(s,x)dx\right) \frac{ds}{(t-s)^\beta} p(\beta)d\beta \quad \text{(by Fubini, see below)}\\
&=\int_0^1 \int_0^t \frac{\partial }{\partial s}\left(\int_D \phi_n(x) u(s,x)dx\right) \frac{ds}{(t-s)^\beta} p(\beta)d\beta \\
&=\int_0^1 \frac{1}{\Gamma(1-\beta)}\int_0^t \frac{\partial }{\partial s}\bar u(s,n) \frac{ds}{(t-s)^\beta} \Gamma(1-\beta)p(\beta)d\beta \\
&=\D^{(\nu)} \bar u(s,n).
 \end{split}\end{equation}

The Fubini-Tonelli argument for the interchange of order of
integration
 in \eqref{real-fubini} can be justified as follows:
\begin{equation*}
\begin{split}
&\bigg|\int_D \phi_n(x) \D^{(\nu)}u(t,x) dx\bigg|\\
&=\bigg|\int_D \phi_n(x) \int_0^1 \int_0^t \frac{\partial
u(s,x)}{\partial s}\frac{ds}{(t-s)^\beta}
 p(\beta)d\beta dx\bigg|\\
&\leq \int_D |\phi_n(x)| \int_0^1 \int_0^t \bigg|\frac{\partial
u(s,x)}{\partial s}\bigg|\frac{ds}
{(t-s)^\beta} p(\beta)d\beta dx\\
&\leq  \int_D |\phi_n(x)||g(x)|dx \int_0^1 \int_0^t k(s)\frac{ds}{(t-s)^\beta} p(\beta)d\beta \\
&\leq \sqrt{|D|}||\phi_n||_{L^2(D)}||g||_{L^\infty}\int_0^1
\int_0^t [C(\beta_0,\beta_1,p)\pi]^{-1}
[\Gamma(1-\beta_1)s^{\beta_1-1}+\Gamma(1-\beta_0)s^{\beta_0-1}]\\
&\ \ \ \ \ \ \ \ \ \  \ \ \ \ \ \ \ \ \ \ \ \ \ \ \ \ \ \ \ \ \ \ \ \ \ \ \ \ \ \times \frac{ds}{(t-s)^\beta} p(\beta)d\beta,
\end{split}
\end{equation*}
using (\ref{time-bound-derivative}). Further, using the property of beta
density, for $0<\gamma, \eta <1 $,
\[\int_0^t\frac{1}{(t-s)^\gamma}s^{\eta-1}ds=t^{\eta-\gamma}\int_0^1(1-u)^{(1-\gamma)-1}u^{\eta-1}du
= B(1-\gamma, \eta)t^{\eta-\gamma}, \] where $B(a, b)$ denotes the
usual beta function. Thus,
\begin{equation*}
\begin{split}
&\bigg|\int_D \phi_n(x) \D^{(\nu)}u(t,x) dx\bigg|\\
&\leq
\sqrt{|D|}||\phi_n||_{L^2(D)}||g||_{L^\infty}[C(\beta_0,\beta_1,p)\pi]^{-1}\bigg[\int_0^1
\int_0^t \Gamma(1-\beta_1)s^{\beta_1-1}\frac{ds}{(t-s)^\beta} p(\beta)d\beta\\
&+\int_0^1 \int_0^t\Gamma(1-\beta_0)s^{\beta_0-1}\frac{ds}{(t-s)^\beta} p(\beta)d\beta\bigg]\\
&=\sqrt{|D|}||\phi_n||_{L^2(D)}||g||_{L^\infty}[C(\beta_0,\beta_1,p)\pi]^{-1}\bigg[\Gamma(1-\beta_1)
\int_0^1t^{\beta_1-\beta}  B(1-\beta, \beta_1) p(\beta)d\beta \\
&+\Gamma(1-\beta_0)\int_0^1t^{\beta_0-\beta} B(1-\beta, \beta_0)p(\beta)d\beta\bigg]\\
&<\infty,
\end{split}
\end{equation*}
which justifies the use of  Fubini-Tonelli theorem in
\eqref{real-fubini}.

Thus, applying the  $\phi_n$-transforms to
(\ref{frac-derivative-bounded-d}), we get
\begin{equation}\label{phi-trans}
\D^{(\nu)}\bar{u}(t,n)=
 -\lambda_n
\bar{u}(t,n).
\end{equation}

\noindent Since $u$ is uniformly continuous on $C ([0,\epsilon])
\times \bar D)$, it is uniformly bounded on $[0,\epsilon] \times
\bar D $. Thus,  by the dominated convergence theorem, we have
$\lim_{t\to 0}\int_D u(t,x)\phi_n(x)dx=\bar f(n)$.  Hence,
$\bar{u}(0,n)=\bar f(n)$.  A similar argument shows that $t\mapsto
\bar u(t,n)$ is a continuous function of $t\in [0,\infty)$ for
every $n$.   Then, taking Laplace transforms on both sides of
\eqref{phi-trans}, we get
\begin{equation}\label{laplace-phi-trans}
\int_0^1
(s^\beta\hat{u}(s,n)-s^{\beta-1}\bar{u}(0,n))\Gamma(1-\beta)p(\beta)d\beta
= -\lambda_n \hat{u}(s,n)
\end{equation}
which leads to
\begin{equation}\label{laplace-phi-trans-1}
\hat{u}(s,n)=\frac{\bar{f}(n)\int_0^1
s^{\beta-1}\Gamma(1-\beta)p(\beta)d\beta}{\int_0^1s^{\beta}\Gamma(1-\beta)p(\beta)d\beta+\lambda_n}.
\end{equation}
Recall $\int_0^1s^{\beta}\Gamma(1-\beta)p(\beta)d\beta = \psi
_W(s)$.
Then, from (\ref{laplace-phi-trans-1}),
\begin{eqnarray} \label{eqn3.99}
\hat{u}(s,n)&=&\frac{\bar{f}(n)\psi_W(s)}{s(\psi_W(s)+\lambda_n)}\nonumber\\
&=&\frac{1}{s}\bar{f}(n)\psi_W(s)
\int_0^{\infty}e^{-(\psi_W(s)+\lambda_n)l}dl\nonumber \\
&=&\int_0^{\infty}e^{-\lambda_n
l}\bar{f}(n)\frac{1}{s}\psi_W(s)e^{-l\psi_W(s)}dl,
\end{eqnarray}
using the property of the exponential density.


\noindent The $\phi_n$-transform of the killed semigroup
$T_D(l)f(x)=\sum_{m=1}^\infty e^{-\lambda_m l}\phi_m (x)\bar f
(m)$ from \eqref{TDdef} is found as follows. Since $\{\phi_n, n\in
\N \}$ is a complete orthonormal basis of $L^2(D),$ we get
\begin{equation}\label{MMMc}
\begin{split}
\overline{[T_D(l)f]}(n)&=\int_D \phi_n(x)T_D(l)f(x)dx\\
&=\int_D \phi_n(x)\int_D p_D(l,x,y) f(y)dydx\\
&=\int_D \phi_n(x)\int_D \sum_{m=1}^\infty e^{-\lambda_m l}\phi_m(x)\phi_m(y) f(y)dydx\\
&=\int_D \phi_n(x)\sum_{m=1}^\infty e^{-\lambda_m l}\phi_m(x)\int_D \phi_m(y) f(y)dydx\\
&=\int_D \phi_n(x)\sum_{m=1}^\infty e^{-\lambda_m l}\phi_m(x)\bar f(m)dx\\
&=\sum_{m=1}^\infty e^{-\lambda_m l}\bar f(m)\int_D \phi_n(x)\phi_m(x)dx\\
&=e^{-l\lambda_n}\bar{f}(n).
 \end{split}
 \end{equation}


Since $T_D(t)$ is a contraction semigroup on $L^2(D)$, $T_D(t)f\in
L^{2}(D)$ and hence Fubini-Tonelli applies.

\noindent By (3.20) in \cite{M-S-ultra},
we have
\begin{equation}\label{eqn3.11n}
\frac{1}{s}\psi_W(s)e^{-\psi_W(s)l}=\int_0^{\infty}e^{-s t}g(t,l)dt,
\end{equation}
where $g(t,l)$ is the smooth density of $E_t$.

\noindent Using the results  (\ref{MMMc}),  (\ref{eqn3.11n}) and
(\ref{eqn3.99}), we get
\begin{eqnarray*}
\int_{0}^{\infty} e^{-st} \bar{u}(t,n)dt&=&\hat{u}(s,n)=\int_0^{\infty}\overline{[T_D(l)f]}(n)\left[\int_0^{\infty}e^{-st}g(t,l)dt
\right]dl\nonumber\\
&=&\int_0^{\infty}e^{-st}\left[\int_0^{\infty}\overline{[T_D(l)f]}(n)g(t,l)dl
\right]dt
\end{eqnarray*}
using (\ref{eqn2.5v3}).

\noindent By the uniqueness of the  Laplace transform,
\begin{eqnarray} \label{eqn3.16v3}
\bar{u}(t,n)&=& \int_0^{\infty}\overline{[T_D(l)f]}(n)g(t,l)dl \nonumber \\
 &=& \bar{f}(n)\int_0^{\infty}e^{-\lambda_n l} g(t, l)dl ~~ (\mbox{
 using} (\ref{MMMc})) \nonumber \\
&=&\bar{f}(n) h(t, \lambda_n),
\end{eqnarray}
where $h(t, \lambda)=\int_0^{\infty}e^{-l\lambda}g(t,l)dl$ is the
Laplace transform of $E_t$.

\noindent Note that inverting the $\phi_n$-transform is equal to
multiplying both sides of the above equation by $\phi_n$ and then
summing up from $n=1$ to $\infty.$
Also, the unique  inverse  $ u(t,~ .) \in L^2(D)$ for each fixed
$t\geq 0$. Inverting the $\phi_n$-transform $ \bar{u}(t, n)$ in
(\ref{eqn3.16v3}), we get an $L^2$-convergent solution of
 (\ref{frac-derivative-bounded-d}) as
\begin{eqnarray}
u(t,x) &=& \sum_{n=1}^{\infty}\bar{u}(t,n) \phi_n(x) \nonumber \\
&=&\sum_{n=1}^{\infty}\bar{f}(n)\phi_n(x)h(\lambda_n,t)\label{mittag-series}
\end{eqnarray}
for each $t\geq 0$.

  In order to complete the proof, it will suffice to show that the series
  \eqref{mittag-series} converges pointwise, and satisfies all the conditions in
  \eqref{frac-derivative-bounded-d}.

\noindent {\bf  Step 1.} We begin showing that
\eqref{mittag-series} convergence uniformly in $t\in [0,\infty)$
in the $L^2$ sense. Define the sequence of functions
\begin{equation}\label{PS1}
u_N(t,x)=\sum_{n=1}^N \bar f(n)\phi_n(x)h(t,\lambda_n).
\end{equation}

\noindent Since $g(t,l)$ is the density of $E_t$, we have  $0<
h(t,\lambda_n)\leq 1$. Also, if  $0<\lambda\leq \eta,$ then
$h(t,\eta) \leq h(t,\lambda)$, showing that $ h(t, \lambda)$ is
nonincreasing in $\lambda.$


Since $f\in L^2(D)$, we can write $f(x)=\sum_{n=1}^\infty \bar
f(n)\phi_n(x)$,  and then the Parseval identity yields
$$\sum_{n=1}^\infty (\bar f(n))^2=||f||_{2,D}^2<\infty.$$
Then, given $\epsilon>0$, we can choose $n_0(\epsilon)$ such that
\begin{equation}\label{f-l2}
\sum_{n=n_0(\epsilon)}^\infty (\bar f(n))^2<\epsilon .
\end{equation}
For $N>M>n_0(\epsilon)$ and $t\geq 0,$
\begin{eqnarray}
||u_N(t,x)-u_M(t,x)||_{2,D}^2& \leq &\left\| \sum_{n=M}^N \bar
f(n)\phi_n(x)h(t,\lambda_n)\right\|_{2,D}^2\nonumber\\
&\leq &h(t,\lambda_{n_0})^2\sum_{n=n_0(\epsilon)}^\infty (\bar
f(n))^2\leq \sum_{n=n_0(\epsilon)}^\infty (\bar f(n))^2<\epsilon.
\end{eqnarray}
Thus, the series \eqref{mittag-series} converges in $L^2(D)$,
uniformly in $t\geq 0$.

\vskip10pt \noindent {\bf  Step 2.} Next we show that the initial
function is defined as the $L^2$ limit of $u(t,x)$ as $t\to 0,$
that is, we show that $t\to u(t, \cdot)\in C((0,\infty); L^2(D))$
and $u(t,\cdot)$ takes the initial datum $f$ in the sense of
$L^2(D)$, i.e.,
$$
||u(t, \cdot)-f||_{2,D}\to 0, \ \mathrm{as} \ t\to 0.
$$
Since $h(t, \lambda)$  is the Laplace transform of $E_t$, it is
completely monotone and non-increasing in $\lambda\geq 0$.
Hence,
$$u(t,x)-f(x)=\sum_{n=1}^\infty \bar f (n)(h(t,\lambda_n)-1)\phi_n(x).$$

Fix $\epsilon \in (0,1)$ and choose $ n_0=n_0(\epsilon)$ as in
(\ref{f-l2}). Then,
\begin{eqnarray}
||u(t,\cdot)-f||_{2,D}^2&=&\sum_{n=1}^\infty (\bar f (n))^2(h(t,\lambda_n)-1)^2\nonumber\\
& \leq & \sum_{n=1}^{n_0(\epsilon)} (\bar f (n))^2(h(t, \lambda_n)-1)^2\nonumber\\
& &+\sum_{n=n_0(\epsilon)+1}^{\infty} (\bar f (n))^2(h(t, \lambda_n)-1)^2\nonumber\\
&\leq & (1-h(t,\lambda_{n_0}))^2||f||_{2,D}+\epsilon\nonumber
\end{eqnarray}
and now  the claim follows, if $h(t,\lambda_n)\to 1,$ as $t\to 0$.

This follows because $E_t\Rightarrow E_0$ in distribution as $t\to
0+$ and hence the Laplace transforms converge.  To see that
$E_t\Rightarrow E_0$, use the fact that $\{E_t\leq x\}=\{W_x\geq
t\}$ which is (3.16) in \cite{M-S-ultra}.  Then for $x>0$ and
$t_n\downarrow 0,$
\[P(E_{t_n}\leq x)=P(W_x\geq t_n)=1-P(W_x\leq t_n)\to 1-P(W_x\leq
0)=1,\] since $W_x$ has a density. As $E_t$ also has a density on
$[0,\infty)$ we see that $P(E_{t_n}\leq x)=0$, if $x< 0.$ Thus,
\[P(E_{t_n}\leq x)\to P(E_0\leq x)=I(x\geq 0)\]
at all continuity points of the limit.  Note that $E_0=0$ almost
surely, since $W_x>0$ almost surely for all $x>0$.

The continuity of $t\mapsto u(t,\cdot)$ in $L^2(D)$ at every point
$t\in (0,\infty)$ can be proved in a similar fashion.

\vskip10pt \noindent {\bf  Step 3.}  A decay estimate for the
solution $u(t,x)$ is obtained as follows. Using Parseval's
identity, the fact that $\lambda_n$ is increasing in $n$, and the
fact that $h(t,\lambda_n)$ is non-increasing for $n\geq 1$, we get
$$
||u(t,\cdot)||_{2,D} \leq h(t,\lambda_1)||f||_{2,D}.
$$

\vskip10pt
 \noindent {\bf  Step 4.}  We next show that the
series (\ref{mittag-series}) defining $u(t,x)$ is the classical
solution to \eqref{frac-derivative-bounded-d}, by proving its
uniform and absolute convergence. We do this by showing that
\eqref{PS1} is a Cauchy sequence in $L^\infty (D)$ uniformly in
$t\geq 0.$

\noindent Applying the Green's second identity, we see that
$$
\overline{\Delta f}(n)=\int_D \Delta f(x)\phi_n (x)dx=-\lambda_n
\bar{f}(n).
$$
Hence,  $\Delta f = \sum_{n=1}^\infty -\lambda_n \phi_n(x)\bar f
(n)$  is absolutely and uniformly convergent by assumption. Let $\epsilon>0.$
Since $\lambda_n\to\infty$ as $n\to\infty$, there exists an
$n_0(\epsilon)$ so that for all $x\in D,$
\begin{equation}
\sum_{n=n_0(\epsilon)}^\infty |\bar f(n)||\phi_n(x)|\leq
\sum_{n=n_0(\epsilon)}^\infty \lambda _n |\bar f(n)||\phi_n(x)|
<\epsilon.
\end{equation}
This is possible since we have assumed that $\Delta f$ has an
expansion which is uniformly and absolutely convergent in
$L^\infty (D)$.
 We will freely use the fact that the series defining $f$ also converges absolutely and uniformly.


\noindent For $N>M>n_0(\epsilon)$ and $t\geq 0$ and $x\in D,$
\begin{eqnarray}
 |u_N(t,x)-u_M(t,x)| &=&
|\sum_{n=M}^{N}\phi_n(x)\bar{f}(n)h(t, \lambda_n)| \nonumber\\
&& \leq \sum_{n=M}^{N}|\phi_n(x)||\bar{f}(n)|< \epsilon,
\end{eqnarray}
since $h(t,\lambda)=\E(e^{-\lambda E_t})\leq 1$ for all $t\geq 0$
and $\lambda\geq 0$.

 This shows that the sequence $u_N(t,x)$ is a Cauchy sequence
in $L^\infty (D)$ and so has a limit in $L^\infty (D)$. Hence, the
series in (\ref{mittag-series}) is absolutely and uniformly
convergent. Also, it follows that $u(t,x)$ satisfies the boundary
conditions in  (\ref{frac-derivative-bounded-d}).


\vskip10pt \noindent {\bf Step 5.} Next we show  that  the
distributed-order fractional time derivative  and the Laplacian
$\Delta$ can be applied term by term in (\ref{mittag-series}). As
$h(t, \lambda)$ is bounded above by unity, we have
 \begin{eqnarray}
\left| \sum_{n=1}^\infty \bar f(n)h(t,\lambda_n)\Delta\phi_n(x)\right|&=& \left|\sum_{n=1}^\infty
\bar f(n)h(t,\lambda_n)
\lambda _n \phi_n(x)\right|\nonumber\\
 &\leq &\sum_{n=1}^{\infty}|\phi_n(x)|| \bar{f}(n)|\lambda_n<\infty,\label{upper-bound-derivatives}
 \end{eqnarray}
where the last inequality follows from the fact that the
eigenfunction expansion of $\Delta f$ converges absolutely and
uniformly. Then the series
$$
\sum_{n=1}^\infty \bar f(n)h(t,\lambda_n)\Delta\phi_n(x)
$$
is absolutely convergent in $L^\infty(D)$ uniformly in
$(0,\infty)$.  Since $h(t,\lambda)$ is an eigenfunction of the
distributed-order Caputo fractional derivative with
$\D^{(\nu)}h(t,\lambda)=-\lambda h(t,\lambda)$ (see
(\ref{dist-order-density-pde})), we have
 $$
\sum_{n=1}^\infty \bar f(n)\phi_n(x)\D^{(\nu)}h(t,\lambda_n)=
\sum_{n=1}^\infty \bar f(n)h(t, \lambda_n)\Delta\phi_n(x).
$$
As the two series are equal term-by-term and the series on the right converges absolutely and uniformly,
the series on the left
converges absolutely and uniformly too.

Now it is easy to check that
 the distributed-order fractional time derivative and Laplacian can be applied term by term in
(\ref{mittag-series}) to give
\begin{eqnarray*}
\D^{(\nu)}u(t,x) - \Delta
u(t,x)&&\\
=&&\sum_{n=1}^\infty \bar f(n)\left[\phi_n(x)\D^{(\nu)}
h(t,\lambda_n)-h(t, \lambda_n)\Delta\phi_n(x)\right]=0,
\end{eqnarray*}
so that the PDE in (\ref{frac-derivative-bounded-d}) is satisfied.
Thus, we conclude that $u$ defined by (\ref{mittag-series}) is a
classical (strong) solution to (\ref{frac-derivative-bounded-d}).



\noindent Further, using Lemma \ref{derivative-bound}, we get
\begin{eqnarray*}
\left|\frac{\partial u(t,x)}{\partial t}\right|&\leq &
\sum_{n=1}^\infty |\bar f(n)| \left|\frac{\partial h(t,
\lambda_n)}{\partial t}\right||\phi_n(x)| \\
 &\leq &  k(t)
\sum_{n=1}^\infty \lambda_n|\bar f(n)||\phi_n(x)|:=k(t)g(x).
\end{eqnarray*}
Since $\Delta f$ has absolutely and uniformly convergent series
expansion with respect to $\{\phi_n:\ n\in \RR{N}\},$ we have $g
\in L^\infty(D)$.




Thus, it follows, from the results obtained above,  that $u \in
\mathcal{H}_\Delta (D_\infty)\cap C_b(\bar D_\infty) $.

\vskip10pt \noindent{\bf  Step 6.}We next show that $u\in C^1(\bar D)$; this follows from the bounds
in \cite[Theorem 8.33]{gilbarg-trudinger}
 and the absolute and uniform convergence of the series defining
 $f$, namely,
 \begin{eqnarray}
 |\phi_n|_{1,\alpha; D}&\leq & C(1+\lambda_n)\sup_{D}|\phi_n(x)|,\label{uniform-eigenvalue-bounds}
 \end{eqnarray}
where $C=C(d, \lambda, \Lambda, \partial D)$ and $\lambda $ is the
constant in the definition of uniform ellipticity of $L$ in
\eqref{elliptic-bounds} and $\Lambda$ is the bound in
\eqref{L-uniform-bound}.  Here, $$|u|_{k,\alpha;
D}=\sup_{|\gamma|=k}[D^\gamma u]_{\alpha ,D}+ \sum_{j=0}^{k}
\sup_{|\gamma|=j}\sup_{D}|D^\gamma u|,\ \ k=0,1,2,\cdots ,$$ and
$$[D^\gamma u]_{\alpha ,D}=\sup_{x,y\in D, x\neq y}\frac{|D^\gamma u(x)- D^\gamma u(y)|}{|x-y|^\alpha}$$
are norms on $C^{k,\alpha}(\bar D)$. Hence,
\begin{eqnarray}
|u(t,.)|_{1,\alpha; D}&\leq & C \sum_{n=1}^\infty |\bar f(n)|h(t,\lambda_n)(1+\lambda_n)\sup_{D}|\phi_n(x)| \nonumber\\
&\leq &C \sum_{n=1}^\infty |\bar f(n)|(1+\lambda_n)\sup_{D}|\phi_n(x)| \nonumber\\
&\leq & C \sum_{n=1}^{\infty}\sup_{D}|\phi_n(x)|| \bar{f}(n)|+
C\sum_{n=1}^{\infty}\lambda_n \sup_{D}|\phi_n(x)|| \bar{f}(n)|
<\infty .\nonumber
 \end{eqnarray}



\vskip10pt \noindent{\bf  Step 7.}  We obtain here the stochastic
solution to (\ref{frac-derivative-bounded-d}), by inverting  the
$\phi_n$-Laplace transform. After showing the absolute and uniform
convergence of the series defining $u,$ we can use a
Fubini-Tonelli type argument to interchange order of summation and
integration in the following, together with \eqref{mittag-series}
and  \eqref{MMMc}, to get a stochastic representation of the
solution as
\begin{eqnarray}
u(t,x)&=&\sum_{n=1}^{\infty}\phi_n(x)\int_0^{\infty}\overline{[T_D(l)f]}(n)g(t,l)dl\nonumber\\
&=&\int_0^{\infty}\left[\sum_{n=1}^{\infty}\phi_n(x)\bar{f}(n)e^{-l\lambda_n}\right]g(t,l)dl\nonumber\\
&=&\int_{0}^{\infty}T_D(l)f(x)g(t,l)dl\nonumber\\
&=& E_x[f(X(E_t))I(\tau_D(X)>E_t)].
\end{eqnarray}
The last equality follows from a simple conditioning argument and
using (\ref{TDdef}).

\vskip10pt \noindent {\bf Step 8.} Finally, we prove the
uniqueness. Let $u_i, i=1, 2,$ be two solutions of
\eqref{frac-derivative-bounded-d} with initial data
$u_i(0,x)=f(x)$ and Dirichlet boundary condition $u_i(t,x)=0$ for
$x\in\partial D$. Then $U=u_1-u_2$ is also a solution of
\eqref{frac-derivative-bounded-d} with zero initial data and zero
boundary value.  Taking $\phi_n$-transform on both sides of
\eqref{frac-derivative-bounded-d} we get
$$
 \D^{(\nu)}{\bar{U}(t,n)} =
 -\lambda_n
\bar{U}(t,n), \ \ \bar{U}(0,n)=0,
$$
and then $\bar U(t,n)=0$ for all $t>0$ and all $n\geq 1$. This
implies that $U(t,x)=0$ in the sense of $L^2$ functions, since
$\{\phi_n: \ n\geq 1\}$ forms a complete orthonormal basis for
$L^2(D)$. Hence, $U(t,x)=0$ for all $t>0$ and almost all $x\in D$.
Since $U$ is a continuous function on $D$, we have $U(t,x)=0$ for
all $(t,x)\in [0,\infty)\times D,$ thereby proving the uniqueness.
\end{proof}

\begin{corollary}\label{subordination}
 The solution in Theorem (\ref{laplace-pde}) also has
the following representation:
\begin{eqnarray} u(t,x)&=&E_{x}[f(X(E_{t}))I(
\tau_D(X)> E_t)]=E_{x}[f(X(E_{t}))I( \tau_D(X(E))> t)].\nonumber
\end{eqnarray}
\end{corollary}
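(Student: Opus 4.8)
The plan is to reduce the corollary to an almost‑sure identity between two indicator random variables, after which the two expectations coincide trivially. The first equality, $u(t,x)=E_x[f(X(E_t))I(\tau_D(X)>E_t)]$, is exactly the stochastic representation obtained in the proof of Theorem \ref{laplace-pde} (Step 7), so nothing new is needed for it. Everything therefore comes down to showing that, on a set of full probability,
$$\{\tau_D(X)>E_t\}=\{\tau_D(X(E))>t\},$$
where $\tau_D(X(E))=\inf\{s\ge 0: X(E_s)\notin D\}$ is the first exit time of the time‑changed process $s\mapsto X(E_s)$; since $f$ is bounded, equality of these events a.s. yields equality of the two expectations.

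The ingredients I would invoke are all recorded earlier: (i) $t\mapsto E_t$ is almost surely continuous, nondecreasing, with $E_0=0$ and $E_t<\infty$ for every $t$ (continuity because $W$ jumps on every interval and is hence strictly increasing; $E_0=0$ because $W_x>0$ a.s.\ for all $x>0$; finiteness because $W_\tau\to\infty$), as discussed in Section \ref{sec3}; (ii) $X$ (here Brownian motion) has continuous sample paths; and (iii) $D$ is open. From (i), the image $\{E_s: 0\le s\le t\}$ is exactly the closed interval $[0,E_t]$. Working pathwise on the full‑measure event where both $E$ and $X$ are continuous, I would then chain the equivalences
\begin{align*}
\tau_D(X(E))>t &\iff X(E_s)\in D\ \text{ for all } s\in[0,t]\\
&\iff X(r)\in D\ \text{ for all } r\in[0,E_t]\\
&\iff \tau_D(X)>E_t .
\end{align*}
In the first equivalence the nontrivial direction uses continuity of $s\mapsto X(E_s)$ and openness of $D$: if $X(E_s)\in D$ for every $s\le t$, then $X(E_t)\in D$, so $X(E_s)$ stays in $D$ for $s$ slightly beyond $t$ and the exit time is strictly larger than $t$. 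The middle equivalence is the identity $\{E_s:s\in[0,t]\}=[0,E_t]$. The last equivalence is immediate in one direction (definition of $\tau_D$ as an infimum) and, in the other, again uses $X(E_t)\in D$, openness of $D$, and continuity of $X$ to force the first exit of $X$ strictly past $E_t$.

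Combining the pathwise identity of events with boundedness of $f$ gives
$$E_x[f(X(E_t))I(\tau_D(X)>E_t)]=E_x[f(X(E_t))I(\tau_D(X(E))>t)],$$
which is the assertion of the corollary. I expect the only delicate point to be the bookkeeping of open versus closed intervals and of strict versus non‑strict inequalities in the chain of equivalences — precisely the places where continuity of the sample paths of both $X$ and $E$, together with the openness of $D$, must be used. There is no analytic obstacle beyond that.
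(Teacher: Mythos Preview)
Your proposal is correct and follows essentially the same route as the paper: both arguments reduce the corollary to the pathwise identity $\{\tau_D(X(E))>t\}=\{X(E([0,t]))\subset D\}=\{X([0,E_t])\subset D\}=\{\tau_D(X)>E_t\}$, using that $E$ is continuous and nondecreasing (so $E([0,t])=[0,E_t]$), that $X$ is continuous, and that $D$ is open. You supply somewhat more detail on the strict-versus-nonstrict bookkeeping than the paper's sketch, but the underlying idea is identical.
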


\begin{proof}  The argument is similar to \cite[Corollary 3.2]{m-n-v-aop} and so we only sketch the proof.
 Given a continuous stochastic process $X_t$ on $\rd$, and an interval $I\subset[0,\infty)$,
we denote $X(I)=\{X_u:u\in I\}$.  Since the domain $D$ is open and $X_u$ is continuous, it follows that
\[\{\tau_D(X)>t\}=\{X([0,t])\subset D\}.\]
Next note that, since $E_t$ is continuous and monotone
nondecreasing,
 $E([0,t])=[0,E_t]$.  Finally, we observe that
\begin{equation*}\begin{split}
\{\tau_D(X(E))>t\}&=\{X(E([0,t])\subset D\}\\
&=\{X([0,E_t])\subset D\}=\{\tau_D(X)>E_t\}
\end{split}\end{equation*}
which completes the proof.
\end{proof}

\begin{remark}
If we time-change Brownian motion $B(t)$ using a nondecreasing stable L\'evy process
$W_t$, then the conclusions of Corollary
\ref{subordination} do not hold, since the stable subordinator does
not have continuous sample paths; see, for example,  Song and
Vondra\u cek \cite{song-von}. In our case, killing the process $B(t)$ and
then applying the time change $E_t$ is the same as applying the time change and then killing, since the sample
paths of
$E_t$ are continuous.
\end{remark}

The next result establishes existence of strong solutions of
distributed-order fractional Cauchy problems \eqref{DOFCPdef} with
$L=\Delta$.

\begin{corollary}\label{strong-solution}
Let $f\in C^{2k}_c(D)$ be a $2k$-times continuously differentiable
function of compact support in D.  If $k>1+3d/4$, then
(\ref{frac-derivative-bounded-d}) has a classical (strong)
solution. In particular, if $f\in C^{\infty}_c(D)$, then the
solution of (\ref{frac-derivative-bounded-d}) is in $C^\infty(D).$
\end{corollary}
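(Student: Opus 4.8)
The plan is to deduce this from Theorem~\ref{laplace-pde}. Extending $f\in C^{2k}_c(D)$ by zero gives an element of $C^{2k}(\bar D)\subset C^1(\bar D)\cap C^2(D)$ ($k\ge 2$ since $k>1+3d/4$), and $\Delta f\in C_c(D)\subset L^2(D)$, so $f\in D(\Delta_D)$; thus the only hypothesis of Theorem~\ref{laplace-pde} that needs work is that the $\{\phi_n\}$-eigenfunction expansion of $\Delta f$ converges uniformly and absolutely on $D$, i.e.\ that $\sum_{n}\lambda_n\,|\bar{f}(n)|\,\sup_D|\phi_n|<\infty$ (which a fortiori gives the same for the expansion of $f$ itself). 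Once this is in hand, Theorem~\ref{laplace-pde} produces the classical solution \eqref{solution-laplacian-frac-cauchy}.

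The first step is an integration-by-parts identity. Since $f$ and all its derivatives of order $\le 2k$ vanish in a neighbourhood of $\partial D$, iterating Green's second identity (as in the proof of Theorem~\ref{laplace-pde}) gives $\overline{\Delta^{j}f}(n)=(-\lambda_n)^{j}\bar{f}(n)$ for $0\le j\le k$, every boundary term being killed by the compact support. In particular $\lambda_n^{k}|\bar{f}(n)|=|\overline{\Delta^{k}f}(n)|$, and Parseval's identity yields
\[
\sum_{n=1}^{\infty}\lambda_n^{2k}\,\bar{f}(n)^{2}=\|\Delta^{k}f\|_{L^{2}(D)}^{2}<\infty .
\]

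The second step imports two standard facts about the Dirichlet Laplacian on the bounded domain $D$: the Weyl asymptotics $\lambda_n\asymp n^{2/d}$, and an eigenfunction sup-norm bound of the form $\sup_D|\phi_n|\le C(1+\lambda_n)^{d/2}$ (coming from ultracontractivity of $T_D(t)$ together with the elliptic $L^\infty$ estimates recalled in Section~\ref{sec2}; this is the bound used in \cite{m-n-v-aop}, see also \cite{davies,gilbarg-trudinger}). By Cauchy--Schwarz,
\[
\sum_{n}\lambda_n|\bar{f}(n)|\sup_D|\phi_n|\le C\sum_{n}\lambda_n^{1+d/2}|\bar{f}(n)|\le C\Big(\sum_{n}\lambda_n^{2(1+d/2-k)}\Big)^{1/2}\Big(\sum_{n}\lambda_n^{2k}\bar{f}(n)^{2}\Big)^{1/2},
\]
and, using $\lambda_n\asymp n^{2/d}$, the first sum on the right converges exactly when $k>1+3d/4$. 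So under this hypothesis the expansions of $f$ and of $\Delta f$ converge absolutely and uniformly on $D$, Theorem~\ref{laplace-pde} applies, and \eqref{frac-derivative-bounded-d} has a classical solution given by \eqref{solution-laplacian-frac-cauchy}.

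For the last assertion, $f\in C^{\infty}_{c}(D)$ lies in $C^{2k}_{c}(D)$ for every $k$, hence $\sum_{n}\lambda_n^{m}|\bar{f}(n)|<\infty$ for every $m\ge 0$. Since $\phi_n\in C^{\infty}(D)$ when $L=\Delta$, interior elliptic (Schauder/bootstrap) estimates bound $\sup_{K}|D^{\gamma}\phi_n|$ on each compact $K\subset D$ by a fixed power of $\lambda_n$, so the series $\sum_n \bar{f}(n)h(t,\lambda_n)\phi_n(x)$ may be differentiated term by term arbitrarily often on $K$; therefore $u(t,\cdot)\in C^{\infty}(D)$ for every $t\ge 0$. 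The main obstacle in this argument is the second step: pinning down the correct powers in the eigenfunction sup-norm estimate and in Weyl's law, and doing the Cauchy--Schwarz bookkeeping so that the threshold comes out exactly as $k>1+3d/4$. The integration-by-parts identity is routine once one notes that compact support annihilates every boundary integral, and the final $C^{\infty}$ claim is a standard bootstrap, so neither of those should cause difficulty.
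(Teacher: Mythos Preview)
Your argument is correct and follows the paper's overall strategy---verify the uniform/absolute convergence hypothesis of Theorem~\ref{laplace-pde} via Green's identity, a sup-norm bound on $\phi_n$, and Weyl's law---but the bookkeeping differs in a way worth noting. The paper invokes the sharper eigenfunction bound $|\phi_n(x)|\le \lambda_n^{d/4}$ (Davies \cite[Example~2.1.8]{davies}), not the $\lambda_n^{d/2}$ you quote, and then uses only the crude estimate $|\bar f(n)|\le c_k\lambda_n^{-k}$ (Cauchy--Schwarz on the single integral $\int_D\Delta^kf\cdot\phi_n$). You instead take the weaker eigenfunction bound but compensate with the full Parseval identity $\sum_n\lambda_n^{2k}\bar f(n)^2=\|\Delta^kf\|_2^2$ and Cauchy--Schwarz on the series; the two imbalances cancel to give the same threshold $k>1+3d/4$. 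Either route is fine, but be aware that the exponent you cite is not the one actually used in the paper (or in \cite{m-n-v-aop}); your bound is valid simply because it is weaker than the $d/4$ bound. Finally, your treatment of the $C^\infty$ assertion (bootstrapping interior elliptic estimates to differentiate the series termwise) is more detailed than the paper's, which leaves that clause essentially unargued.
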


\begin{proof}
By Example 2.1.8 of \cite{davies},  $|\phi_n(x)|\leq
(\lambda_n)^{d/4}$. Also,  from Corollary 6.2.2 of
\cite{davies-d}, we have $\lambda_n\sim  n^{2/d}$.

\noindent Applying  Green's second identity k-times, we get
\begin{equation}\label{phi-trans-laplacian}
\overline{\Delta^k f}(n)=\int_D \Delta ^k f(x)\phi_n
(x)dx=(-\lambda_n)^k \bar{f}(n).
\end{equation}
Using  Cauchy-Schwartz inequality and the fact $f\in C^{2k}_c(D),$
we get
   $$\overline{\Delta^k f}(n)\leq \left[\int_D (\Delta^k f(x))^2dx\right]^{1/2}\left[\int_D (\phi_n(x))^2dx
   \right]^{1/2}=\left[\int_D (f^{2k}(x))^2dx\right]^{1/2}= c_k,$$
 where $c_k$ is  a constant independent of $n$.

\noindent This and  \eqref{phi-trans-laplacian} give
$|\bar{f}(n)|\leq c_k(\lambda_n)^{-k}$.

Since
$$
\Delta f(x)=\sum_{n=1}^\infty -\lambda_n\bar f (n) \phi_n(x),
$$
to get the absolute and uniform convergence of the series defining
$\Delta f$, we consider
\begin{equation*}\begin{split}
   \sum_{n=1}^{\infty}\lambda_n|\phi_n(x)|| \bar{f}(n)|&\leq \sum_{n=1}^{\infty} (\lambda_n)^{d/4+1}c_k(\lambda_n)^{-k}\\
   &\leq c_k\sum_{n=1}^{\infty} (n^{2/d})^{d/4+1-k}=c_k\sum_{n=1}^{\infty} n^{1/2+2/d-2k/d}
\end{split}\end{equation*}
 which is finite if $(\frac{2k}{d}-\frac{2}{d}-\frac{1}{2})>1$, i.e., if  $k>1+\frac{3}{4}d$.
\end{proof}

\begin{remark}
In an interval $(0,M)\subset \rr$, eigenfunctions and eigenvalues
are explicitly known.  Eigenvalues of the Laplacian on $(0,M)$ are
$(n\pi /M)^2$,
  and the corresponding eigenfunctions are $\sin(n\pi x/M),$ for $n=1,2,\cdots  .$ The form of the solution
  in  (\ref{mittag-series}) on a
bounded interval $(0,M)$ in $\rr$ was obtained by \cite{agrawal,
naber}. Agrawal \cite{agrawal} worked with single-order fractional
Cauchy problem with Dirichlet boundary conditions. Naber
\cite{naber} considered Dirichlet and Neumann boundary conditions
in one space dimension.
\end{remark}



\noindent Recall that  $D_\infty=(0,\infty )\times D$ and define
now
\begin{equation*}\begin{split}
\mathcal{H}_{L}(D_\infty)&= \{u:D_\infty\to \rr :\ \ L
u(t,x)\in C(D_\infty)\}; \\
\mathcal{H}_{L}^{b}(D_\infty)&= \mathcal{H}_{L}(D_\infty)
\cap \{u: |\partial_t u(t,x)|\leq k(t)g(x),\ \  g\in
L^\infty(D), \ t>0  \}
\end{split}\end{equation*}
where $k(t)$ is defined in \eqref{time-bound-derivative}. The
following result extends Theorem \ref{laplace-pde}
 to general uniformly elliptic second-order operators.



\begin{theorem}\label{FC-PDE-Lx}
   Let $D$ be a bounded domain with $\partial D \in
 C^{1,\alpha}$ for some $0<\alpha<1$, and suppose that $L$ is given by \eqref{unif-elliptic-op}
  with $a_{ij}\in C^{\alpha}(\bar D)$.  Let $\{X(t)\}$ be a continuous Markov process with generator $L$,
  and $T_D(t)$ the killed semigroup corresponding to the process $\{X(t)\}$ in $D$. Let $E_t$ be the inverse \eqref{Etdef}
of the subordinator $W_t$, independent of $\{X(t)\}$, with L\'evy
measure \eqref{psiWdef}. Suppose that
$\mu(d\beta)=p(\beta)d\beta$, as in Lemma \ref{derivative-bound},
and  $\D^{(\nu)}$ is the distributed-order fractional derivative
defined by \eqref{DOFDdef}.
Then, for any $f\in D(L_D)\cap C^1(\bar D)\cap C^2( D) $
  for which the eigenfunction expansion (of $Lf$) with respect to the complete orthonormal basis
  $\{\psi_n:\ n\geq 1\}$
  converges uniformly and absolutely,
the (classical) solution  of
\begin{eqnarray}
\D^{(\nu)}u(t,x)
&=&
L u(t,x),  \  \ x\in D, \ t\geq 0\label{frac-derivative-bounded-dd};\\
u(t,x)&=&0, \ x\in \partial D,\ t\geq 0; \nonumber\\
u(0,x)& =& f(x), \ x\in D,\nonumber
\end{eqnarray}
for  $u  \in
\mathcal{H}_{L}^{b}(D_\infty)\cap C_b(\bar D_\infty) \cap C^1(\bar D)$,
 is given by
\begin{eqnarray}
u(t,x)&=&E_{x}[f(X(E_{t}))I( \tau_D(X)> E_t)]=E_{x}[f(X(E_{t}))I( \tau_D(X(E))> t)]\nonumber\\
&=& \int_{0}^{\infty}T_D(l)f(x)g(t,l)dl= \sum_0^\infty
\bar{f}(n)\psi_n(x)h(t, \mu_n),\label{stoch-rep-L}
\end{eqnarray}\end{theorem}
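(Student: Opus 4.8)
The plan is to follow the template of Theorem \ref{laplace-pde}, replacing the Laplacian $\Delta$ by the uniformly elliptic operator $L$ and the eigenpairs $\{\lambda_n,\phi_n\}$ by $\{\mu_n,\psi_n\}$, the eigenvalues and eigenfunctions of $L_D$ described in Section \ref{sec2}. Since $L$ is self-adjoint in divergence form with Dirichlet boundary conditions, Green's second identity still applies to $L$: for $f,\psi_n\in C^1(\bar D)$ vanishing on $\partial D$ one has $\int_D \psi_n L u\,dx=\int_D u\,L\psi_n\,dx=-\mu_n\bar u(t,n)$. So the first step is to verify that the $\psi_n$-transform of $Lu(t,x)$ equals $-\mu_n\bar u(t,n)$, exactly as in the proof of Theorem \ref{laplace-pde}, and that the $\psi_n$-transform commutes with $\D^{(\nu)}$; the Fubini--Tonelli justification there used only the bound $|\partial_t u(t,x)|\le k(t)g(x)$ with $g\in L^\infty(D)$, i.e.\ $u\in\mathcal H_L^b(D_\infty)$, together with $u,\psi_n\in C^1(\bar D)$ and $\|\psi_n\|_{L^2(D)}=1$, all of which hold verbatim.

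Once the $\psi_n$-transform is applied, \eqref{frac-derivative-bounded-dd} becomes the scalar equation $\D^{(\nu)}\bar u(t,n)=-\mu_n\bar u(t,n)$ with $\bar u(0,n)=\bar f(n)$, whose solution, via the Laplace-transform computation \eqref{laplace-phi-trans}--\eqref{eqn3.20v3} (unchanged, with $\lambda_n$ replaced by $\mu_n$), is $\bar u(t,n)=\bar f(n)h(t,\mu_n)$ by uniqueness of the Laplace transform and continuity of $t\mapsto h(t,\mu_n)$ (Lemma \ref{eigenvalue-problem}). Inverting the $\psi_n$-transform gives the candidate $u(t,x)=\sum_n\bar f(n)\psi_n(x)h(t,\mu_n)$, convergent in $L^2(D)$ uniformly in $t\ge 0$ by the Step 1 argument (using only $0<h(t,\mu_n)\le 1$ and Parseval). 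Steps 2 and 3 (attaining the initial datum in $L^2$, $t\mapsto u(t,\cdot)$ continuous, and the decay estimate $\|u(t,\cdot)\|_{2,D}\le h(t,\mu_1)\|f\|_{2,D}$) carry over word for word, since they rely only on complete monotonicity of $h$ and on $E_t\Rightarrow E_0$, which is independent of the spatial operator.

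The substantive step is Step 4--Step 6: upgrading to absolute and uniform ($L^\infty$) convergence of $\sum_n\bar f(n)\psi_n(x)h(t,\mu_n)$ and of $\sum_n\bar f(n)h(t,\mu_n)L\psi_n(x)=\sum_n-\mu_n\bar f(n)h(t,\mu_n)\psi_n(x)$, then applying $\D^{(\nu)}$ and $L$ term by term. Here the hypothesis that the eigenfunction expansion of $Lf$ converges uniformly and absolutely is exactly what makes $\sum_n\mu_n|\bar f(n)|\,\|\psi_n\|_{L^\infty(D)}<\infty$ (note $\overline{Lf}(n)=-\mu_n\bar f(n)$ by Green's identity), and since $0<h(t,\mu_n)\le 1$ the two series dominate termwise, giving uniform-in-$t$ absolute convergence in $L^\infty(D)$; $h(t,\mu_n)\le 1$ also yields the boundary condition. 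That $u\in C^1(\bar D)$ uses the Schauder-type bound \eqref{uniform-eigenvalue-bounds}, $|\psi_n|_{1,\alpha;D}\le C(1+\mu_n)\sup_D|\psi_n|$, which is available because $a_{ij}\in C^\alpha(\bar D)$ and $\partial D\in C^{1,\alpha}$ (this is the place where the regularity hypotheses on $L$ and $D$ are genuinely used, and is where I expect the main technical care to be needed — checking that the eigenfunctions of the general $L$, not just the Laplacian, satisfy the required $C^{1,\alpha}$ interior-up-to-boundary estimates from \cite{gilbarg-trudinger}). Since $h(t,\mu)$ is an eigenfunction of $\D^{(\nu)}$ with $\D^{(\nu)}h(t,\mu)=-\mu h(t,\mu)$ (Lemma \ref{eigenvalue-problem}) and, by Lemma \ref{derivative-bound}, $|\partial_t h(t,\mu_n)|\le\mu_n k(t)$, term-by-term differentiation gives $\D^{(\nu)}u-Lu=\sum_n\bar f(n)[\psi_n(x)\D^{(\nu)}h(t,\mu_n)-h(t,\mu_n)L\psi_n(x)]=0$ and $|\partial_t u(t,x)|\le k(t)\sum_n\mu_n|\bar f(n)||\psi_n(x)|=:k(t)g(x)$ with $g\in L^\infty(D)$, so $u\in\mathcal H_L^b(D_\infty)\cap C_b(\bar D_\infty)\cap C^1(\bar D)$. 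Finally, Step 7 identifies the stochastic representation: a Fubini--Tonelli interchange (legitimate once absolute/uniform convergence is in hand) combined with $\overline{[T_D(l)f]}(n)=e^{-l\mu_n}\bar f(n)$ from \eqref{MMMc}--\eqref{TDdef} and $\tilde g(s,l)=\frac1s\psi_W(s)e^{-l\psi_W(s)}$ gives $u(t,x)=\int_0^\infty T_D(l)f(x)g(t,l)\,dl=E_x[f(X(E_t))I(\tau_D(X)>E_t)]$, and the alternative form $E_x[f(X(E_t))I(\tau_D(X(E))>t)]$ follows from Corollary \ref{subordination}, whose proof used only continuity and monotonicity of $E_t$ and is thus operator-independent. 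Uniqueness is Step 8 verbatim: the difference $U=u_1-u_2$ has $\psi_n$-transform solving $\D^{(\nu)}\bar U(t,n)=-\mu_n\bar U(t,n)$ with $\bar U(0,n)=0$, hence $\bar U(t,n)=0$ for all $n$, so $U\equiv 0$ in $L^2(D)$ and then pointwise by continuity.
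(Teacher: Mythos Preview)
Your proposal is correct and essentially parallels the paper's argument, with one presentational difference worth noting. The paper derives the candidate solution by a separation-of-variables ansatz $u(t,x)=G(t)F(x)$, which splits \eqref{frac-derivative-bounded-dd} into the spatial eigenvalue problem \eqref{space-pde} and the temporal equation \eqref{time-pde}, and then invokes Lemma~\ref{eigenvalue-problem} to identify $G(t)=\bar f(n)h(t,\mu_n)$; you instead apply the $\psi_n$-transform directly to the PDE (exactly as in Theorem~\ref{laplace-pde}), using self-adjointness of $L$ in divergence form to get $\overline{Lu}(t,n)=-\mu_n\bar u(t,n)$, and reach the same scalar equation $\D^{(\nu)}\bar u(t,n)=-\mu_n\bar u(t,n)$. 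Both routes arrive at the identical series \eqref{formal-sol-L}, and from that point on the paper explicitly defers to the Steps~1--8 of Theorem~\ref{laplace-pde}, which is precisely what you carry out in detail. Your approach has the minor advantage of being a literal transcription of the Laplacian proof with $(\lambda_n,\phi_n)\to(\mu_n,\psi_n)$, while the paper's separation of variables makes the product structure of the eigenfunction expansion more visible; neither buys anything the other lacks, and your identification of the Schauder estimate \eqref{uniform-eigenvalue-bounds} for general $L$ (via the hypotheses $a_{ij}\in C^\alpha(\bar D)$, $\partial D\in C^{1,\alpha}$) as the one place requiring genuine care is exactly right.
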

where $h(t,\mu)=\E (e^{-\mu E_t})=\int_0^\infty e^{-x\mu}g(t,x)\,dx$
is the Laplace transform of $E_t$.

\begin{proof}
Let $u(t,x)=G(t)F(x)$ be a solution of
(\ref{frac-derivative-bounded-dd}).
Substituting in the PDE (\ref{frac-derivative-bounded-dd}) leads
to
$$
F(x)\D^{(\nu)} G(t)
= G(t)L F(x)
$$
and now dividing both sides by $G(t)F(x),$ we obtain
$$
\frac{\D^{(\nu)} G(t)}{G(t)} = \frac{L F(x)}{F(x)}=-\mu.
$$
That is,
\begin{equation}\label{time-pde}
\D^{(\nu)} G(t)=-\mu G(t), \ t>0;
\end{equation}
\begin{equation}\label{space-pde}
LF(x)=-\mu F(x), \ x\in D, \ F|_{\partial D}=0.
\end{equation}

\noindent Problem (\ref{space-pde}) is solved by an infinite
sequence of pairs $(\mu_n, \psi_n)$, $n \ge 1,$ where $0<
\mu_1<\mu_2\leq \mu_3 \leq \cdots$ is a sequence of numbers such
that $\mu_n\to\infty$, as $n\to\infty$, and $\psi_n$ is a sequence
of functions that form a complete orthonormal set in $L^2(D)$ (cf.
\eqref{eigen-eigen}).
 In particular, the initial function $f$ regarded as an element of $L^2(D)$ can be represented as
 \begin{equation}
 f(x)=\sum_{n=1}^\infty \bar f(n)\psi_n(x).
 \end{equation}
\noindent An application of the Parseval identity yields
\begin{equation}
 ||f||_{2,D}=\sum_{n=1}^\infty (\bar f(n))^2.
 \end{equation}

\noindent Using the $\mu_n$ determined by (\ref{space-pde}) and
recalling from Lemma \ref{eigenvalue-problem} that $h(t, \mu_n)$
solves  \eqref{time-pde} with $\mu=\mu_n$, we obtain
$$
G(t)=G_0(n)h(t, \mu_n),
$$
where $G_0(n)$ is selected to satisfy the initial condition $f$.
We will show that
\begin{equation}\label{formal-sol-L-1}
u(t,x)=\sum_{n=1}^{\infty}\bar{f}(n)h(t, \mu_n)\psi_n(x)
\end{equation}
solves the PDE (\ref{frac-derivative-bounded-dd}).

 Define approximate solutions of the form

\begin{equation}
 u_N(t,x)=\sum_{n=1}^{N}G_0(n)h(t, \mu_n)\psi_n(x),  \ G_0(n)=\bar{f}(n).
\end{equation}

\vskip10pt \noindent {\bf Step 1.} Following the proof of Theorem
\ref{laplace-pde}, it can be shown that the sequence
$\{u_N(t,\cdot )\}_{N\in \N}$ is a Cauchy sequence in $L^2(D)\cap
L^\infty(D)$, uniformly in $t\in [0,\infty)$.  Hence, the solution
to (\ref{frac-derivative-bounded-dd}) is given formally by
\begin{equation}\label{formal-sol-L}
u(t,x)=\sum_{n=1}^{\infty}\bar{f}(n)h(t, \mu_n)\psi_n(x).
\end{equation}

\vskip10pt \noindent {\bf Step 2.} It follows again, as in the
proof of Theorem {\ref{laplace-pde}},
 the series defining
$u$ and $ Lu$ converge absolutely and uniformly so that we can
apply the fractional
 time derivative and uniformly elliptic operator $L$ term by term to show that $u$ defined by
 (\ref{formal-sol-L}) is indeed a classical solution to \eqref{frac-derivative-bounded-dd}.

\vskip10pt \noindent {\bf  Step 3.} The stochastic representation
of the solution, as given in (\ref{stoch-rep-L}), also follows in
a similar manner and hence we omit the details.

\vskip10pt \noindent {\bf  Step 4.} We  have also a decay estimate
for $u$ as in Theorem \ref{laplace-pde}, namely,
$$
||u(\cdot, t)||_{2,D} \leq h(t,\mu_1)||f||_{2,D}.
$$

\noindent The uniqueness of the solution can be proved as
before.
\end{proof}

\section{Extensions and open questions}\label{sec5}

In this section, we derive some general conditions on the mixing
distribution $\mu(d\beta)$ in \eqref{DOFDdef} that are sufficient
to obtain classical solutions to the distributed-order fractional
Cauchy problem \eqref{DOFCPdef} on bounded domains, as in Theorems
\ref{laplace-pde} and \ref{FC-PDE-Lx}.  In particular, we remove
the assumption that $\mu(d\beta)=p(\beta)d\beta$ to allow atoms.
Then we discuss related literature, and some open questions.

\begin{lemma}
Suppose $\mu$ is a finite measure with  $supp (\mu)\subset(0,1)$
that satisfies \eqref{finite-mu-bound}. Assume also that
 $|\partial_th(t,\lambda)|\leq b(\lambda)k_e(t)$ for some functions $b$ and $ k_e$
satisfying the condition
\begin{equation}\label{e-convolution-k}
b(\lambda)\int_0^1\int_0^t\frac{k_e(s)ds}{(t-s)^\beta}d\mu(\beta)<\infty,
\end{equation}
for $t, \lambda >0$. Then $h(t,\lambda)=\E(e^{-\lambda E_t})$ is a classical solution of the eigenvalue problem

\begin{equation}\label{e-dist-order-density-pde}
\D^{(\nu)}h(t,\lambda)=-\lambda h(t, \lambda); \ \ h(0, \lambda)=1.
\end{equation}

\end{lemma}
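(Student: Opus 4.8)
The plan is to follow the template of Lemmas~\ref{eigenvalue-problem} and~\ref{derivative-bound}. The identity $\D^{(\nu)}h(t,\lambda)=-\lambda h(t,\lambda)$ already holds in the Laplace-transform sense by Lemma~\ref{eigenvalue-problem}, and $h(0,\lambda)=\E(1)=1$; moreover $t\mapsto h(t,\lambda)$ is continuous, being the Laplace transform of the a.s.\ continuous process $E_t$. So the only new content is (i) to check that, under \eqref{e-convolution-k}, the distributed-order derivative $\D^{(\nu)}h(t,\lambda)$ exists in the classical sense of \eqref{DOFDdef}, and (ii) to upgrade the Laplace-transform identity to a pointwise one.

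For (i), fix $t,\lambda>0$. By Tonelli's theorem, \eqref{e-convolution-k} forces $\int_0^t k_e(s)(t-s)^{-\beta}\,ds<\infty$ for $\mu$-a.e.\ $\beta\in(0,1)$, so since $|\partial_s h(s,\lambda)|\le b(\lambda)k_e(s)$ by hypothesis, the Caputo derivative
\[
\frac{\partial^\beta h(t,\lambda)}{\partial t^\beta}=\frac{1}{\Gamma(1-\beta)}\int_0^t\frac{\partial_s h(s,\lambda)}{(t-s)^\beta}\,ds
\]
is an absolutely convergent integral for $\mu$-a.e.\ $\beta$. Integrating against $\nu(d\beta)=\Gamma(1-\beta)\mu(d\beta)$ and dominating once more,
\[
\bigl|\D^{(\nu)}h(t,\lambda)\bigr|\le \int_0^1\frac{1}{\Gamma(1-\beta)}\int_0^t\frac{|\partial_s h(s,\lambda)|}{(t-s)^\beta}\,ds\;\Gamma(1-\beta)\mu(d\beta)\le b(\lambda)\int_0^1\int_0^t\frac{k_e(s)\,ds}{(t-s)^\beta}\,\mu(d\beta)<\infty,
\]
which shows that $\D^{(\nu)}h(t,\lambda)$ is well-defined classically. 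The same domination makes the Fubini interchange in \eqref{eqn3.14n} legitimate here too---nothing in that computation used $\mu(d\beta)=p(\beta)\,d\beta$---so $\int_0^\infty e^{-st}\D^{(\nu)}h(t,\lambda)\,dt=(\tilde h(s,\lambda)-1/s)\psi_W(s)$.

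For (ii), combine this with \eqref{eqn3.16n}, which states $(\tilde h(s,\lambda)-1/s)\psi_W(s)=-\lambda\tilde h(s,\lambda)$; hence $t\mapsto\D^{(\nu)}h(t,\lambda)$ and $t\mapsto-\lambda h(t,\lambda)$ have the same Laplace transform. Since the latter is continuous on $(0,\infty)$ and the former is as well---by dominated convergence in $t$, using the locally integrable bound from step (i) and continuity of fractional integrals of an $L^1_{\mathrm{loc}}$ function---the uniqueness theorem for Laplace transforms gives $\D^{(\nu)}h(t,\lambda)=-\lambda h(t,\lambda)$ for all $t>0$. With $h(0,\lambda)=1$ this is precisely \eqref{e-dist-order-density-pde}, so $h(t,\lambda)$ is a classical solution.

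The only genuinely delicate point is the continuity of $t\mapsto\D^{(\nu)}h(t,\lambda)$ used to pass from a.e.\ equality to equality everywhere; I expect to dispose of it either via the fractional-integral continuity remark above, or more economically by asserting the classical solution up to the standard a.e.\ identification, exactly as at the end of Lemma~\ref{derivative-bound}. Everything else is routine; the conceptual observation is that neither Lemma~\ref{eigenvalue-problem} nor the Fubini bound uses absolute continuity of $\mu$, so permitting atoms costs nothing, and \eqref{finite-mu-bound} is only needed---as in Section~\ref{sec3}---to guarantee that $\nu$ is a finite measure and that $E_t$ has a continuous density.
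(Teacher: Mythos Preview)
Your proposal is correct and follows essentially the same approach as the paper: the paper's proof is a one-liner that invokes Lemma~\ref{eigenvalue-problem} for the Laplace-transform identity and observes that \eqref{e-convolution-k} is precisely the condition needed for $\D^{(\nu)}h(t,\lambda)$ to exist classically (mirroring the domination at the end of Lemma~\ref{derivative-bound}). You have simply unpacked that one-liner in full, and your observation that neither the Fubini step in \eqref{eqn3.14n} nor anything in Lemma~\ref{eigenvalue-problem} requires absolute continuity of $\mu$ is exactly the point.
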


\begin{proof}
The proof follows from Lemma \ref{eigenvalue-problem}, and the
fact that \eqref{e-convolution-k} is a sufficient condition for
$\D^{(\nu)} h(t, \lambda)$ to be defined as a classical function.
\end{proof}

Suppose that $|\partial_th(t,\lambda)|\leq b(\lambda)k_e(t)$, where
$b$ and $ k_e$  satisfy, in addition to  \eqref{e-convolution-k},
\begin{equation}\label{summability-k}
k_e(t)\sum_{n=1}^\infty b(\lambda_n)\bar{f}(n)|\phi_n(x)|<\infty.
\end{equation}
It is assumed here that the above series converges absolutely
and uniformly for $t>0$.

\noindent Let $\beta\in (0,1)$, $D_\infty=(0,\infty )\times D$
and define
\begin{equation*}\begin{split}
\mathcal{H}_{L}(D_\infty)&= \{u:D_\infty\to \rr :\ \ L
u(t,x)\in C(D_\infty)\};\\
\mathcal{H}_{L}^{b,e}(D_\infty)&= \mathcal{H}_{L}(D_\infty)
\cap \{u: |\partial_t u(t,x)|\leq k_e(t)g(x),\ \  g\in
L^\infty(D),  \ t>0  \}, \nonumber
\end{split}\end{equation*}
where  $k_e$ and $ b$ satisfy  \eqref{e-convolution-k} and
\eqref{summability-k}.
The following result extends Theorem \ref{FC-PDE-Lx}
 to allow atoms in the mixing measure $\mu(d\beta)$.


\begin{theorem}\label{e-FC-PDE-Lx}
   Let $D$ be a bounded domain with $\partial D \in
 C^{1,\alpha}$ for some $0<\alpha<1$, and suppose that $L$ is given by \eqref{unif-elliptic-op} with
 $a_{ij}\in C^{\alpha}(\bar D)$.  Let $\{X(t)\}$ be a continuous Markov process with generator $L$, and
 $T_D(t)$ be the killed semigroup corresponding to the process $\{X(t)\}$ in $D$. Let $E_t$ be the inverse
  \eqref{Etdef}
of the subordinator $W_t$, independent of $\{X(t)\}$, with L\'evy
measure \eqref{psiWdef}.
 Let $f\in D(L_D)\cap C^1(\bar D)\cap C^2( D) $
  for which the eigenfunction expansion (of $Lf$) with respect to the complete orthonormal basis
  $\{\psi_n:\ n\geq 1\}$
  converges uniformly and absolutely.
Then the (classical) solution  of
\begin{eqnarray}
\D^{(\nu)}u(t,x)
&=&
L u(t,x),  \  \ x\in D, \ t\geq 0\label{e-frac-derivative-bounded-dd};\\
u(t,x)&=&0, \ x\in \partial D,\ t\geq 0; \nonumber\\
u(0,x)& =& f(x), \ x\in D,\nonumber
\end{eqnarray}
for  $u  \in
\mathcal{H}_{L}^{b,e}(D_\infty)\cap C_b(\bar D_\infty) \cap C^1(\bar D)$, with the distributed order
fractional derivative $\D^{(\nu)}$ defined by \eqref{DOFDdef}, is given by
\begin{eqnarray}
u(t,x)&=&E_{x}[f(X(E_{t}))I( \tau_D(X)> E_t)]=E_{x}[f(X(E_{t}))I( \tau_D(X(E))> t)]\nonumber\\
&=& \int_{0}^{\infty}T_D(l)f(x)g(t,l)dl = \sum_0^\infty
\bar{f}(n)\psi_n(x)h(t, \mu_n),\label{e-stoch-rep-L}
\end{eqnarray}\end{theorem}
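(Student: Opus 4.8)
The plan is to follow the proof of Theorem~\ref{FC-PDE-Lx} essentially verbatim, replacing every appeal to the explicit bound $k(t)$ of Lemma~\ref{derivative-bound} by the standing hypotheses $|\partial_t h(t,\lambda)|\le b(\lambda)k_e(t)$, \eqref{e-convolution-k}, and \eqref{summability-k}. First I would separate variables: writing a solution as $u(t,x)=G(t)F(x)$ and substituting into \eqref{e-frac-derivative-bounded-dd} splits the problem into the spatial eigenvalue problem $LF=-\mu F$ on $D$ with $F|_{\partial D}=0$, solved by the complete orthonormal system $\{(\mu_n,\psi_n)\}$ of Section~\ref{sec2}, and the temporal equation $\D^{(\nu)}G=-\mu G$, $G(0)=1$. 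By the lemma preceding this theorem (which rests on Lemma~\ref{eigenvalue-problem}), $h(t,\mu_n)=\E(e^{-\mu_n E_t})$ solves the temporal equation in the classical sense, since \eqref{e-convolution-k} is precisely the condition making $\D^{(\nu)}h(t,\mu_n)$ a classical function. This yields the candidate $u(t,x)=\sum_n\bar f(n)h(t,\mu_n)\psi_n(x)$ with truncations $u_N$.

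Next I would reproduce the convergence steps. Because $g(t,l)$ is a probability density, $0<h(t,\mu_n)\le 1$ and $h(t,\cdot)$ is nonincreasing, so $\{u_N(t,\cdot)\}$ is Cauchy in $L^2(D)$ uniformly in $t\ge 0$ exactly as in Step~1 of Theorem~\ref{laplace-pde}; the hypothesis that the eigenfunction expansion of $Lf$ converges absolutely and uniformly, together with $\mu_n\to\infty$, gives absolute uniform convergence of $\sum_n|\bar f(n)||\psi_n(x)|$ and hence of the series for $u$ in $L^\infty(D)$, so $u$ satisfies the Dirichlet condition. The initial condition $\|u(t,\cdot)-f\|_{2,D}\to 0$ follows from $h(t,\mu_n)\to 1$ as $t\to 0$, which holds because $E_t\Rightarrow E_0=0$; continuity of $t\mapsto u(t,\cdot)$ in $L^2$ is analogous. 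Term-by-term application of $L$ is licit since $\sum_n\bar f(n)h(t,\mu_n)L\psi_n(x)=-\sum_n\mu_n\bar f(n)h(t,\mu_n)\psi_n(x)$ is dominated by $\sum_n\mu_n|\bar f(n)||\psi_n(x)|<\infty$; because $\D^{(\nu)}h(t,\mu_n)=-\mu_n h(t,\mu_n)$, the series $\sum_n\bar f(n)\psi_n(x)\D^{(\nu)}h(t,\mu_n)$ equals that series term by term, so it too converges absolutely and uniformly, and equating the two gives $\D^{(\nu)}u=Lu$ pointwise.

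Then I would check the remaining memberships. The derivative bound is where \eqref{summability-k} enters: $|\partial_t u(t,x)|\le\sum_n|\bar f(n)||\partial_t h(t,\mu_n)||\psi_n(x)|\le k_e(t)\sum_n b(\mu_n)|\bar f(n)||\psi_n(x)|=:k_e(t)g(x)$, and \eqref{summability-k} says precisely $g\in L^\infty(D)$; together with the uniform bounds above this gives $u\in\mathcal{H}_L^{b,e}(D_\infty)\cap C_b(\bar D_\infty)$. That $u(t,\cdot)\in C^1(\bar D)$ follows from the Schauder-type estimate $|\psi_n|_{1,\alpha;D}\le C(1+\mu_n)\sup_D|\psi_n|$ of \cite[Theorem~8.33]{gilbarg-trudinger} and absolute uniform convergence of the expansion of $f$, as in Step~6 of Theorem~\ref{laplace-pde}. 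The stochastic representation comes from inverting the $\psi_n$-Laplace transform: using the Laplace transform $\tilde g(s,l)=s^{-1}\psi_W(s)e^{-l\psi_W(s)}$ of $g(t,l)$ and the identity $\overline{[T_D(l)f]}(n)=e^{-l\mu_n}\bar f(n)$, a Fubini argument and uniqueness of the Laplace transform give $\bar u(t,n)=\int_0^\infty\overline{[T_D(l)f]}(n)g(t,l)\,dl$; summing against $\psi_n$ and then using the path-continuity identity of Corollary~\ref{subordination} yields $u(t,x)=\int_0^\infty T_D(l)f(x)g(t,l)\,dl=E_x[f(X(E_t))I(\tau_D(X)>E_t)]=E_x[f(X(E_t))I(\tau_D(X(E))>t)]$. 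Uniqueness is the $\psi_n$-transform argument of Step~8: the difference $U$ of two solutions satisfies $\D^{(\nu)}\bar U(t,n)=-\mu_n\bar U(t,n)$ with $\bar U(0,n)=0$, forcing $\bar U\equiv 0$ and hence $U\equiv 0$ by completeness of $\{\psi_n\}$ and continuity of $U$.

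The main obstacle I anticipate is re-running the Fubini--Tonelli interchange that lets the $\psi_n$-transform commute with $\D^{(\nu)}$ — the analogue of \eqref{real-fubini} — without the closed-form bound on $k(t)$ and now allowing $\mu$ to carry atoms. One must bound $\int_D|\psi_n(x)|\int_0^1\int_0^t|\partial_s u(s,x)|(t-s)^{-\beta}\,ds\,\mu(d\beta)\,dx$, and since $|\partial_s u(s,x)|\le k_e(s)g(x)$ with $g\in L^\infty(D)$ and $\psi_n\in L^2(D)$, this is finite precisely by condition \eqref{e-convolution-k}; every ``beta density'' manipulation from the proofs of Theorems~\ref{laplace-pde} and \ref{FC-PDE-Lx} must be replaced by this single abstract inequality. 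Once that interchange is secured, the rest of the argument is routine bookkeeping copied from the earlier proofs.
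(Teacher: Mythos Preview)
Your proposal is correct and matches the paper's own proof, which consists of a single sentence stating that the argument follows the same steps as Theorems~\ref{laplace-pde} and~\ref{FC-PDE-Lx} using the properties \eqref{e-convolution-k} and \eqref{summability-k} of $k_e(t)$. You have in fact written out the details that the paper omits, including the precise points where \eqref{e-convolution-k} replaces the beta-integral computation and where \eqref{summability-k} is used for the $L^\infty$ bound on $g$.
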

where $h(t,\mu)=\E (e^{-\mu E_t})=\int_0^\infty e^{-x\mu}g(t,x)\,dx$
is the Laplace transform of $E_t$.

\begin{proof}
The proof follows the same steps as in the proof of Theorems
\ref{laplace-pde} and \ref{FC-PDE-Lx} and  using the properties
\eqref{e-convolution-k} and  \eqref{summability-k} of the function
$k_e(t)$.
\end{proof}

\begin{remark}
Let $\mu(d\beta)=\sum_{j=1}^N
c_j^{\beta_j}(\Gamma(1-\beta_j))^{-1}\delta_{\beta_j}(\beta)d\beta$,
for  $0<\beta_1<\beta_2<\cdots <\beta_N<1$. In this case, the
subordinator is
 $W_t=\sum_{j=1}^N c_j W_t^{\beta_j}$ for independent stable subordinators $W_t^{\beta_j}$, for $j=1,\cdots, N$.
In this case, the functions  $k_e(t)$ and $b(\lambda)$ that
satisfy   \eqref{summability-k},  \eqref{e-convolution-k} and
$$
|\partial_th(t,\lambda)|\leq b(\lambda)k_e(t)
$$
are   $k_e(t)=(c_j^{\beta_j}\sin(\beta_j\pi))^{-1}(t^{\beta_j-1})$
for all $j=1,\cdots, N$ and $b(\lambda)=\lambda$, respectively.
The proof of this fact follows the same steps as in the proof of
equation (2.19) in \cite{koch3} using the properties of
$\mu(\beta)$. Hence, in this case Theorem \ref{e-FC-PDE-Lx}
applies and we have a classical solution of
\eqref{e-frac-derivative-bounded-dd} given by
\eqref{e-stoch-rep-L}.
\end{remark}

To conclude this paper, we discuss the related literature and some
open problems.  Given a uniformly bounded, strongly continuous
semigroup $T(t)$ with generator $L$ on some Banach space $H$, the
Cauchy problem
 $\partial u(t,x)/\partial t=L u(t,x)$ with $u(0,x)=f(x)$ has solution $u(t,x)=T(t)f(x)$ for any initial
 condition $f\in H$ \cite{ABHN}.  One important special case is the pseudo-differential operator
\begin{equation}\begin{split}\label{jumpgen}
Lu(t,x)
&=\sum_{i,j=1}^{d}a_{ij}(x)\frac{ \partial^2 u(t,x)}{\partial
x_i\partial  x_j}+\sum_{i=1}^d b_i(x)\frac{\partial u(t,x)}{\partial
x_i}\\
&+\int_{y\neq 0}\left(u(t,x-y)-u(t,x)+\frac{\sum_{i=1}^d \frac{\partial u(t,x)}{\partial
x_i}y_i}{1+\sum_{i=1}^d y_i^2}\right)\,\phi(x,dy)
\end{split}\end{equation}
that appears in the backward equation of a Markov process $X(t)$ \cite{Jacob,schilling}.  The probability
distribution of the Markov process $X(t)$ solves the forward equation, which is the Cauchy problem with the
adjoint of the generator $L$.  The integral term in \eqref{jumpgen} represents a jump diffusion
(e.g., a stable process).  For stable generators, the explicit connection with stochastic differential
equations driven by a stable L\'evy process was established by Zhang et al. \cite{ParticleTracking}
and Chakraborty \cite{Chakraborty}.  In that case, the integral term in \eqref{jumpgen} can be written in terms
of fractional derivatives in the space variable.

The solution \eqref{solution-laplacian-frac-cauchy} to the fractional Cauchy problem
 $\partial^\beta u(t,x)/\partial t^\beta=L u(t,x)$ for $0<\beta<1$ was established by Baeumer and Meerschaert
  \cite{fracCauchy} in the general Banach space setting.  Baeumer et al.\ \cite{bmn-07} and Nane \cite{nane}
  specialized to Markov processes with generator \eqref{jumpgen}, and established a connection to iterated
  Brownian motion \cite{allouba1,burdzy1,burdzy2, deblassie}.   Hahn et al.\ \cite{h-k-umarov} developed the
  connection with stochastic differential equations driven by a time-changed L\'evy process $X(E_t)$ for
  generators \eqref{jumpgen}, so that their result includes jump diffusions on $\rd$.  Their results extend
  those of \cite{Chakraborty, ParticleTracking} to distributed-order fractional Cauchy problems on $\rd$.
  Hahn et al.\ \cite{h-k-umarov} also give the integral solution \eqref{solution-laplacian-frac-cauchy}
  as in \cite{fracCauchy,bmn-07}.  Kochubei \cite{koch3} provides strong solutions of distributed order
  fractional Cauchy problems on $\rd$ in the case $L=\Delta$.  Meerschaert and Scheffler \cite{M-S-triangular}
  discuss generalized Cauchy problems of the form
  $\psi_W(\partial/\partial t)u(t,x)=L u(t,x)+\delta(x)\psi_W(t,\infty)$, where $\psi_W(s)$ is the
  Laplace exponent of a nondecreasing L\'evy process (subordinator) whose L\'evy measure $\phi_W$ has
  infinite total mass, and $L$ is the generator of another L\'evy process.  This reduces to the distributed
  order fractional Cauchy problem \eqref{DOFCPdef} in the special case when \eqref{phiWdef} holds.
  As in Section \ref{sec3}, the paper \cite{M-S-triangular} shows that the density $u(t,x)$ of the CTRW scaling
  limit $A(E_t)$ solves the generalized Cauchy problem, when $E_t$ is the inverse of the subordinator $W_t$
  with $\E[e^{-sW_t}]=e^{-t\psi_W(s)}$.  Strong solutions of generalized Cauchy problems on $\rd$, including
  fractional or distributed-order Cauchy problems, seem to be an open problem.

For bounded domains, the general results of \cite{fracCauchy}
remain valid, so that the solution formula
\eqref{solution-laplacian-frac-cauchy} still holds in the
appropriate Banach space.  The results of this paper provide
strong solutions in that case, so long as $L$ generates a
diffusion without jumps.  To the best of our knowledge, the
construction of strong solutions for jump diffusions remains a
challenging open problem. Eigenvalue expansions
 can be found explicitly in some special cases.  The main technical difficulty is to obtain regularity of the
 eigenfunctions, or at least sharp bounds, for the generator \eqref{jumpgen} in the case of jump diffusions on
 bounded domains.  See Chen et al. \cite{chen-kim-song} for a recent study on this problem.  One explicit example
 is to take $L=-(-\Delta)^{\alpha/2}$ for $0<\alpha<2$, the classical fractional power of the Laplacian
 \cite{Hi-Ph}, which generates a spherically symmetric stable L\'evy process.  This results from \eqref{jumpgen}
 with $a=b=0$ and $\phi(x,dy)=C_{d,\alpha} \|y\|^{-\alpha-1}dy$, where $C_{d,\alpha}$ is a constant that depends
 on the stable index $\alpha$ and the dimension $d$ of the space, see for example \cite{multiFADE}.  This is a
 type of fractional derivative in space, called the Riesz fractional derivative of order $\alpha$.  Some results
 for this case are available in Chen and Song \cite{chenSong}, Chen et al. \cite{chen-kim-song2} and  Song and
 Vondra\u cek \cite{song-von}.

\end{document}